\DeclareMathOperator*{\argmin}{argmin}
\newcommand{\nrm}[1]{\left\| #1 \right\|}
\newcommand{\mean}[1]{{\langle #1 \rangle}}
\newcommand{\polN}{\mathbb N}
\newcommand{\polR}{\mathbb R}
\newcommand{\bfx}{\boldsymbol{x}}
\newcommand{\bigo}[1]{{\mathcal{O}\left(#1\right)}}
\newcommand{\Hm}{{\mathring H_{\mathrm{per}}^{-1}}}
\newcommand{\Homz}{{\mathring{H}_{\mathrm{per}}^{1}}}
\newcommand{\Ho}{{{H}_{\mathrm{per}}^{1}}}
\newcommand{\lapm}{{\Delta_M}}
\newcommand{\HMm}{{\mathring{H}_{\mathrm{per},M}^{-1}}}
\newcommand{\nlapmi}{{(-\Delta_M)^{-1}}}
\newcommand{\HMo}{{\mathring{H}^1_{\mathrm{per},M}}}
\newcommand{\half}{\frac{1}{2}}
\newcommand{\slot}{{\ \cdot \ }}
\newcommand{\calB}{{\mathcal{B}}}
\newcommand{\normL}[1]{{\| #1 \|_{\mathcal{L}}}}
\newcommand{\normLinv}[1]{{\| #1 \|_{\mathcal{L}^{-1}}}}
\newcommand{\dx}{{\operatorname{d}\!\bfx}}
\newcommand{\x}{\bfx}
\newcommand{\vkp}{{v_{k+1}}}
\newcommand{\vk}{{v_{k}}}
\newcommand{\etak}{{\eta_{k}}}
\newcommand{\dG}{{\delta G}}
\newcommand{\dGtil}{{\delta \tilde{G}}}
\newcommand{\LL}{\mathcal{L}}
\newcommand{\LLinv}{{\mathcal{L}^{-1}}}
\newcommand{\norm}[2]{\left\| #1 \right\| _{#2}}
\newcommand{\pairing}[2]{\bigl\langle #1 , #2 \bigr\rangle}
\newcommand{\iprd}[2]{\left( #1 , #2 \right)}
\newcommand{\iprdL}[2]{\left( #1 , #2 \right)_\LL}
\newcommand{\HH}{\mathbb{H}}
\newcommand{\RR}{\mathbb{R}}
\newcommand{\NN}{\mathbb{N}}
\newcommand{\s}{{\beta}}	
\newcommand{\ins}{{\alpha}} 
\newcommand{\CdvB}{{C_{B1}}} 
\newcommand{\CddB}{{C_{B2}}} 
\theoremstyle{definition}
\newtheorem{thm}{Theorem}[section]
\newtheorem{prop}[thm]{Proposition}
\newtheorem{cor}[thm]{Corollary}
\newtheorem{lem}[thm]{Lemma}
\newtheorem{defn}[thm]{Definition}
\newtheorem{rmk}[thm]{Remark}
	\title[PPGD]{A perturbed preconditioned gradient descent method for the unconstrained minimization of composite objectives}
	\author{Jea-Hyun Park}
  \address[J.-H. Park]{Department of Mathematics, Milwaukee School of Engineering, Milwaukee, WI 53202, USA} 
  \email[J.-H. Park]{park@msoe.edu}
  \author{Abner J.~Salgado}
  \address[A.J.~Salgado]{Department of Mathematics, The University of Tennessee, Knoxville, TN 37996, USA} 
  \email[A.J.~Salgado]{asalgad1@utk.edu}
  \author{Steven M.~Wise}
  \address[S.M.~Wise]{Department of Mathematics, The University of Tennessee, Knoxville, TN 37996, USA} 
  \email[S.M.~Wise]{swise1@utk.edu}
	\date{\today}
\begin{document}
	\maketitle

	\begin{abstract}
We introduce a perturbed preconditioned gradient descent (PPGD) method for the unconstrained minimization of a strongly convex objective $G$ with a locally Lipschitz continuous gradient. We assume that $G(v)=E(v)+F(v)$ and that the gradient of $F$ is only known approximately. Our analysis is conducted in infinite dimensions with a preconditioner built into the framework. We prove a linear rate of convergence, up to an error term dependent on the gradient approximation. We apply the PPGD to the stationary Cahn-Hilliard equations with variable mobility under periodic boundary conditions. Numerical experiments are presented to validate the theoretical convergence rates and explore how the mobility affects the computation.
	\end{abstract}

	\section{Introduction}

	\subsection{Problem of interest}
	\label{sec:prob-of-interest}

Let $\HH$ be a Hilbert space and $E,F:\HH \to \RR$ be smooth. Define the composite objective
\begin{equation}
\label{eq:GisComposite}
  G(v) \coloneqq E(v) + F(v), \qquad \forall v \in \HH.
\end{equation}
We assume that $G$ is strongly convex with Lipschitz continuous derivative. We are interested in the following composite optimization problem: Find
\begin{equation}
	\label{eqn:minimization}
	\min_{v \in \HH} G(v).
\end{equation}
Our main source of difficulty and originality is that the derivative of $F$ is only known approximately. We cast our problem in the optimization language, but the main applications we have in mind are related to the numerical solution of nonlinear partial differential equations (PDEs) where, in many cases, the solution to a PDE minimizes an associated energy functional.

The structure of $G$ naturally suggests the use of gradient-based methods; see \cite{nesterov2018lectures, wise2020FASD, feng2017PSD, psw2021PAGD}. However, the restrictions on $F$ make this approach infeasible. In Section~\ref{sec:app-to-CHeqn} we consider a scenario where this framework naturally arises.

	\subsection{Literature}\label{sec:literature}

For basic results on smooth optimization we refer to \cite{nesterov2004intro}. For convex objectives with globally Lipschitz derivative, the \emph{gradient descent} method (\cite[p. 25 (1.2.9)]{nesterov2004intro}) converges to the minimizer with rate $\bigo{1/k}$; whereas the \emph{optimal} or \emph{accelerated gradient descent} method (\cite[p. 76 (2.2.6)]{nesterov2004intro}) converges with rate $\bigo{1/k^2}$, where $k$ is number of iterations. When the objective is, in addition, strongly convex, the convergence rates become linear: $\bigo{(1-\mu/L)^k}$ and $\bigo{(1-\sqrt{\mu/L})^k}$, respectively. Here $\mu$ and $L$ are the best strong convexity and Lipschitz constants, respectively, that can be found.

Composite optimization, i.e., minimizing an objective consisting of two terms with different structural features, is intimately related to inexact optimization, and explicit discussions date back at least to 1979 \cite{lions1979splitting}. The authors of~\cite{lions1979splitting},  inspired by \cite{peaceman1955numerical,douglas1956numerical}, studied certain \emph{splitting algorithms} for monotone operators.
In \cite{nesterov2007gradient} a composite objective $\phi=f+\Psi$ is assumed, where $f$ is smooth and convex and $\Psi$ is non-smooth, but \emph{simple}. This reference also addresses the strongly convex case and shows that the gradient method (with \emph{composite gradient mapping} \cite[p. 4 (2.6)]{nesterov2007gradient}), and its accelerated version are as efficient as the smooth objective case.

In a similar vein \cite{daspremont2008smooth} shows that the accelerated descent method of \cite{nesterov1983method} with \emph{noisy} gradient retains the same convergence rate up to the gradient error: $\bigo{1/k^2} + 3\delta$, where $k$ is the number of iterations and $\delta$ is the gradient error parameter. Reference \cite{schmidt2011convergence} studies convergence of inexact methods for composite, convex objectives with non-smooth terms. It shows that the \emph{proximal-gradient descent} method has convergence rate $\bigo{1/k}$ and the \emph{accelerated proximal-gradient} method $\bigo{1/k^2}$, respectively, up to an error term. Their analysis leads to a similar convergence rate without error terms if these satisfy suitable summability conditions. When the smooth part of the objective is strongly convex, the rates are as in the smooth case, provided one assumes suitable decay rates on the error.

References \cite{devolder2013exactness, devolder2013firstorder, devolder2013strongconv, devolder2013intermediate} discuss inexact optimization in a comprehensive way.  Their study includes convergence and error behaviors of the \emph{primal gradient descent} method (PGM) and \emph{fast gradient descent} method (FGM) with inexact \emph{oracle}. For both convex and strongly convex objectives, the rates for PGM are, up to oracle error, the same as those for PGM with exact oracle; see \cite[p. 4 Definition 1]{devolder2013firstorder} and \cite[p. 2 Definition 1]{devolder2013strongconv}, or \cite[Definition 4.1 and 5.1]{devolder2013exactness} for more details. These references also study how errors accumulate and are \emph{amplified} for FGM. The \emph{amplifying errors} may seem at odds with the result of \cite{daspremont2008smooth}, where the magnitude of errors caused by noisy gradients stays bounded by $3\delta$ as mentioned earlier.  For detailed discussions and comparisons, see \cite[pp. 133-135]{devolder2013exactness}.

In signal processing applications, the optimization of composite functions arise naturally as the objective often consists of a (smooth) fidelity term and a non-smooth one, usually related to deblurring or sparsity promotion; see, for instance, \cite{daubechies2003iterative}. This reference seems to be responsible for popularizing the so-called ISTA (\emph{iterative shrinkage-thresholding algorithm}), for sparse signal processing problems. In essence, this is a forward-backward splitting method. In \cite{beck2009FISTA}, the \emph{fast iterative shrinkage-thresholding algorithm} (FISTA), which is an accelerated version of ISTA, was developed.

Our work can be situated between many of the previous ones. It can be seen, for example, as the PGM with inexact oracle of \cite{devolder2013strongconv}, where the computation of the gradients introduces inexactness. It can also be seen from the perspective of \cite{schmidt2011convergence}. However, there, the inexactness aims to provide a general framework and is treated abstractly. On the other hand, we make the inexactness explicit and the analysis is done in settings that are suitable for the numerical solutions of PDEs, which make the results of the previous works not applicable, as they assume finite dimensions and a global Lipschitz condition.

We preemptively make some valid criticisms of our work. First, we introduce a \emph{uniform vanishing property} for perturbations to treat inexactness explicitly. This may seem too strong to be useful in applications. Nevertheless, this assumption fits the applications we have in mind, as we illustrate in Section~\ref{sec:app-to-CHeqn}. Second, we do not use an accelerated scheme. As existing results show, including our own (e.g., \cite{psw2021PAGD, psw2023benchmark}), it is safe to assume that, at least in the practical sense, \emph{once basic gradient methods are established, the accelerated version will certainly show improved convergence}, especially in the strong convex setting. Thus, we focus on perturbations and the gradient method. Accelerated methods involve several more complications and will be the target of future study.

	\subsection{Summarized contributions}
	\label{sec:contribution}

Our contributions are summarized as follows. 
	\begin{enumerate}[1.]
	\item
We propose what we call the \emph{perturbed preconditioned gradient descent} method (PPGD) for the composite optimization of objectives like those described in Section~\ref{sec:prob-of-interest} and give a convergence analysis of the method. Unlike most existing works, we work in infinite dimensions and only assume a \emph{local} Lipschitz condition on the gradient of the objective. In the numerical approximation of PDEs, global Lipschitz conditions are too restrictive to be useful, and this always requires a growing dimensionality of the problem. In addition, we include preconditioning explicitly into our analysis. In the infinite dimensional setting, this is crucial to establish norm equivalences, which is at the core of our convergence analysis. Finally, we discuss inexactness explicitly using the \emph{uniform vanishing property} for perturbations; see Section~\ref{sec:PPGD-method}. This clarifies the restrictions imposed on computations of the inexact gradients. Also, it is tailored to a common case: the approximate solution of subproblems.

	\item 
As a tool for convergence, we prove estimates \eqref{est:dream-prelim} and \eqref{est:dream}, which hold for strongly convex objectives with \emph{locally} Lipschitz gradients. These are a generalization of some of central estimates that hold for convex, globally Lipschitz smooth objectives. They play an essential role in the convergence proof of the PPGD method by allowing us to obtain an \emph{invariant set} for the iterates. For descent-type methods the extension from global to a local Lipschitz condition is not relevant. However, due to the inexactness of the gradient, PPGD is not a descent method. This is the significance of our extension.
		
	\item 
As an application, we provide a numerical study of the stationary Cahn-Hilliard (CH) equations with variable mobility subject to periodic boundary conditions. We investigate not only the convergence of our numerical solver, but we also discuss how our method allows us to use the \emph{fast Fourier transform} (FFT). Due to the variable mobility, this is not possible with a standard approach.
	\end{enumerate}

	\subsection{Organization}
This paper is organized as follows. In Section~\ref{sec:prelim}, we collect notation and preliminary notions that are the basis of our analysis and results. In Section~\ref{sec:PPGD-method}, we prove convergence of the PPGD method, where the uniform vanishing property for perturbations is introduced. In Section~\ref{sec:app-to-CHeqn}, the stationary CH equation is fit into the PPGD framework and we study convergence of the resulting method. Lastly, in Section~\ref{sec:numerics}, we present numerical experiments for the stationary CH equation and discuss their theoretical and numerical implications.

	\section{Preliminaries}
	\label{sec:prelim}

Throughout $\coloneqq$ denotes equality by definition. $(\HH, (\, \cdot\, ,\, \cdot\, )_\HH)$ is a real and separable Hilbert space, $\HH^*$ is its dual, and $\pairing{\phi}{v}$ is the duality pairing of $v\in\HH$ and $\phi\in\HH^*$.  All \emph{energies} or \emph{objectives} $\HH \to \RR$ are assumed continuously Fr\'echet differentiable. For normed spaces $X$ and $Y$, $\calB(X,Y)$ is the space of bounded linear maps $X \to Y$.


\subsection{Strongly convex, locally Lipschitz smooth functions}\label{sub:prelim-Lip-conv}


We begin by defining a preconditioner.

	\begin{defn}[preconditioner]
	\label{def:precon}
A \emph{preconditioner} is $\LL \in \calB(\HH,\HH^*)$ such that the bilinear form
	$
(u,v)_\LL \coloneqq \pairing{\LL u}{v} 
	$
	satisfies, for any $u,v\in\HH$,
	\begin{align*}
(u,v)_\LL =(v,u)_\LL ,
	\quad 
(u,v)_\LL \le C_2 \norm{u}{\HH} \norm{v}{\HH}, 
	\quad 
C_1 \norm{u}{\HH}^2 \le (u,u)_\LL	,
	\end{align*}	
for some $C_1, C_2>0$.	We call these properties \emph{symmetry}, \emph{continuity}, and \emph{coercivity} of the bilinear form, respectively.
	\end{defn}

 Throughout, we assume that we have a preconditioner $\LL : \HH \to \HH^*$ available. All metric notions will be understood up to norm equivalence.

	\begin{prop}[properties of $\LL$]
	\label{prop:LL-Hilbert}
Let $\LL \in \calB(\HH,\HH^*)$ be a preconditioner. Then, $( \, \cdot \, , \, \cdot \,)_\LL$ is an inner product on $\HH$ and its induced norm,
	$
	\norm{ \, \cdot \, }{\LL}
	$,
	is equivalent to $\norm{ \, \cdot \, }{\HH}$ 
	\[
\sqrt{C_1} \norm{v}{\HH}\le\norm{v}{\LL}\le\sqrt{C_2} \norm{v}{\HH}.
	\]
	Moreover, the bilinear form
	\[
	  (\phi,\psi)_{\LLinv} \coloneqq \pairing{\phi}{\LLinv \psi}, \qquad \forall \phi,\psi \in \HH^*,
	\]
	is an inner product on $\HH^*$, which makes it complete. The induced norm,
	$
	\norm{ \, \cdot \, }{\LLinv} 
	$,
	is equivalent to $\norm{ \, \cdot \, }{\HH^*}$: 
	\[
	\frac{1}{\sqrt{C_2}} \norm{\phi}{\HH^*} 
	\le 
	\norm{\phi}{\LLinv} 
	\le 
	\frac{1}{\sqrt{C_1}} \norm{\phi}{\HH^*},
	\]
and it coincides with the operator norm with respect to the $\LL$--norm, i.e.,  
	\[
\norm{\phi}{\LLinv} \coloneqq \sup_{0\ne v\in\HH} \frac{ \pairing{\phi}{v} }{ \|v\|_\LL }.
	\]
Lastly, $\norm{\LLinv \phi}{\LL}=\norm{\phi}{\LLinv}$, for all $\phi\in \HH^*$ and $
	\|\LL v\|_\LLinv=\|v\|_\LL$, for every $v\in \HH$.
	\end{prop}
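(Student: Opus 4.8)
The plan is to treat the two inner-product claims by direct verification and then reduce everything about the dual form to the primal one via $\LLinv$. First I would observe that the three defining properties of Definition~\ref{def:precon} make $\iprdL{\slot}{\slot}$ a genuine inner product: bilinearity is inherited from the linearity of $\LL$ and of the duality pairing, symmetry is assumed outright, and positive definiteness follows from coercivity, since $\iprdL{v}{v}\ge C_1\norm{v}{\HH}^2>0$ whenever $v\ne 0$. Taking $u=v$ in coercivity and in continuity immediately gives the two-sided bound $\sqrt{C_1}\norm{v}{\HH}\le\norm{v}{\LL}\le\sqrt{C_2}\norm{v}{\HH}$, so $\norm{\slot}{\LL}$ and $\norm{\slot}{\HH}$ are equivalent; in particular $(\HH,\iprdL{\slot}{\slot})$ is itself a Hilbert space.

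The key analytic step, and the one I expect to be the crux, is to establish that $\LL$ is invertible, so that $\LLinv\in\calB(\HH^*,\HH)$ and the quantities $\iprd{\phi}{\psi}_{\LLinv}=\pairing{\phi}{\LLinv\psi}$ are even well defined. I would obtain this from the Lax--Milgram theorem (or, exploiting symmetry, from the Riesz representation theorem applied to the Hilbert space just constructed): continuity and coercivity of $\iprdL{\slot}{\slot}$ guarantee that for every $\phi\in\HH^*$ there is a unique $u\in\HH$ with $\iprdL{u}{v}=\pairing{\phi}{v}$ for all $v\in\HH$, i.e.\ $\LL u=\phi$. Hence $\LL$ is a bounded bijection with bounded inverse.

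With $\LLinv$ in hand, the remaining assertions follow by transporting everything back to $\HH$. Writing $u=\LLinv\phi$ and $w=\LLinv\psi$ gives $\iprd{\phi}{\psi}_{\LLinv}=\pairing{\LL u}{w}=\iprdL{u}{w}$, so the inner-product axioms for $\iprd{\slot}{\slot}_{\LLinv}$ are inherited verbatim from those of $\iprdL{\slot}{\slot}$; in particular its induced norm satisfies $\norm{\phi}{\LLinv}=\norm{\LLinv\phi}{\LL}$, which is precisely the identity $\norm{\LLinv\phi}{\LL}=\norm{\phi}{\LLinv}$, and the companion identity $\norm{\LL v}{\LLinv}=\norm{v}{\LL}$ is then the special case $\phi=\LL v$. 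For the operator-norm characterization I would again set $u=\LLinv\phi$, use $\pairing{\phi}{v}=\iprdL{u}{v}$, and apply the Cauchy--Schwarz inequality in $\iprdL{\slot}{\slot}$: this bounds $\pairing{\phi}{v}/\norm{v}{\LL}$ by $\norm{u}{\LL}=\norm{\phi}{\LLinv}$, with equality attained at $v=u$, so $\norm{\phi}{\LLinv}=\sup_{0\ne v\in\HH}\pairing{\phi}{v}/\norm{v}{\LL}$.

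Finally, inserting the equivalence $\sqrt{C_1}\norm{v}{\HH}\le\norm{v}{\LL}\le\sqrt{C_2}\norm{v}{\HH}$ into this supremum and comparing with $\norm{\phi}{\HH^*}=\sup_{0\ne v\in\HH}\pairing{\phi}{v}/\norm{v}{\HH}$ yields $\tfrac{1}{\sqrt{C_2}}\norm{\phi}{\HH^*}\le\norm{\phi}{\LLinv}\le\tfrac{1}{\sqrt{C_1}}\norm{\phi}{\HH^*}$. Completeness of $\HH^*$ under $\norm{\slot}{\LLinv}$ is then immediate: this norm is equivalent to $\norm{\slot}{\HH^*}$, and $\HH^*$ is complete in its dual norm, so I would simply defer the completeness claim until after the equivalence is in place rather than prove it directly.
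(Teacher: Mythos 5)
Your proof is correct, and it matches the paper's treatment: the paper states this proposition without proof, remarking only that $\LLinv$ is the Riesz map with respect to the $\LL$--inner product, which is exactly the pivot of your argument (Lax--Milgram/Riesz on the Hilbert space $(\HH, \iprdL{\slot}{\slot})$ to get $\LLinv \in \calB(\HH^*,\HH)$, then transporting the inner product via $\iprd{\phi}{\psi}_{\LLinv} = \iprdL{\LLinv\phi}{\LLinv\psi}$). Every step checks out, including the attainment of the supremum at $v = \LLinv\phi$ and the deferral of completeness to the norm equivalence with $\norm{\slot}{\HH^*}$.
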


We will call $( \, \cdot \, , \, \cdot \,)_\LL$ the \emph{$\LL$--inner product}. Notice that, if $\LL$ is a preconditioner, then its inverse is the Riesz map with respect to the $\LL$--inner product.

Next we define strong convexity, Lipschitz continuity, and Lipschitz smoothness.

\begin{defn}[strong convexity]
	Let $G:\HH\to\RR$ be Fr\'echet differentiable, and $\delta G$ be its Fr\'echet derivative. We say that $G$ is \emph{$\mu$--strongly convex} (with respect to the $\LL$--norm) if there exists a constant $\mu>0$ such that
	\begin{equation}
		\pairing{\dG(u)-\dG(v)}{u-v}\ge \mu	 \norm{u-v}{\LL}^2 \quad \forall \, u,v\in\HH ,
		\label{est:strconv-pair}
	\end{equation}
	or, equivalently,
	\begin{equation}
		G(v)\ge G(u) + \pairing{\dG(u)}{v-u} +\frac{\mu}{2} \norm{v-u}{\LL}^2 \quad \forall \, u,v\in\HH .	
		\label{est:low-trap}
	\end{equation}
	If one of these holds with $\mu=0$, then $G$ is convex.
\end{defn}

Condition \eqref{est:low-trap} is often called the \emph{lower quadratic trap}. 

	\begin{rmk}[existence and uniqueness]
If an objective $G:\HH\to\RR$ is $\mu$--strongly convex ($\mu>0$), then it has a unique minimizer. A necessary and sufficient condition for the minimum is that the gradient vanishes; see \cite{nesterov2004intro, nesterov2018lectures, ciarlet1989intro}.
	\end{rmk}	

\begin{defn}[Lipschitz smoothness]\label{def:norm-loc-lip}
    Let $G:\HH\to\RR$. If, for every $B\subset \HH$ that is bounded and convex, there is $L_B>0$ such that
	\begin{equation}
		\pairing{\dG(u)-\dG(v)}{u-v} \le L_B\norm{u-v}{\LL}^2 \quad \forall \, u,v\in B,
		\label{est:lip-pair}
	\end{equation}
	or, equivalently,
	\begin{equation}
		G(v)\le G(u) + \pairing{\dG(u)}{v-u} +\frac{L_B}{2} \norm{v-u}{\LL}^2 \quad \forall \, u,v\in B,
		\label{est:up-trap}
	\end{equation}
	then we say that $G$ is \emph{locally Lipschitz smooth} with respect to the $\LL$--norm. If the constant $L_B$ can be chosen independently of $B$, then $G$ is said to be \emph{globally Lipschitz smooth} with respect to $\LL$--norm.
	If $\dG$ satisfies 
	\begin{equation}
		\norm{\dG(u)-\dG(v)}{\LLinv} \le L_B\norm{u-v}{\LL} \quad \forall \, u,v\in B,
		\label{est:lip-conti}
	\end{equation}
	we say that \emph{$G$ has locally Lipschitz derivatives} or $\dG:\HH\to\HH^*$ is \emph{locally Lipschitz continuous} with respect to $\LL$--norm. If $\dG$ satisfies \eqref{est:lip-conti} and $L_B=L$ can be chosen independently of $B$, $\dG$ is \emph{globally Lipschitz continuous} with respect to the $\LL$--norm.
\end{defn}

Notice that \eqref{est:lip-conti} implies the local Lipschitz smoothness of $G$. Convex, \emph{globally} Lipschitz smooth functions satisfy \emph{dual versions} of \eqref{est:lip-pair} and \eqref{est:up-trap}; see \cite[p.~67, Thm.~2.1.5 (2.1.12)]{nesterov2018lectures} and \cite[p.~77, Thm.~2.1.12 (2.1.32)]{nesterov2018lectures}. However, to our knowledge, this is unknown for the case of \emph{locally} Lipschitz smooth functions. Therefore, we present \eqref{est:dual-lip} and \eqref{est:dream}, which precisely assert the claimed generalization. Later, \eqref{est:dream} will play an essential role in the convergence proof of the PPGD method. Furthermore, these estimates can be useful whenever the objective function has only \emph{locally} Lipschitz derivatives and the minimizing method is not necessarily of descent-type. For these results, we require \emph{strong} convexity of the objective.

	\begin{lem}[dual lower trap]
	\label{lem:dual-est-llip}
Let $G:\HH\to\RR$ be strongly convex and locally Lipschitz smooth with respect to the $\LL$--norm. Then, for any $B\subset \HH$ that is bounded and convex, there is $\hat L_B>0$, such that
	\begin{equation}\label{est:dream-prelim}
		G(u) +\pairing{\dG(u)}{v-u} +\frac{1}{2\hat L_B} \norm{\dG(v)-\dG(u)}{\LLinv}^2 \le G(v), \quad \forall u,v\in B.
	\end{equation}
	Consequently, we have
	\begin{equation}\label{est:dual-lip}
		\frac{1}{\hat L_B}\norm{\delta G(v)-\delta G(u)}{\LLinv}^2 \le \pairing{\delta G(v)-\delta G(u)}{v-u}, \quad \forall u,v\in B.
	\end{equation}	
\end{lem}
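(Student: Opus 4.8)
The plan is to transplant the classical argument for convex functions with globally $L$--Lipschitz gradient into the local setting, where the single genuinely new difficulty is to keep the \emph{gradient--descent trial point} inside a set on which a Lipschitz constant is available. Fix a bounded convex $B$ and set $d \coloneqq \sup_{u,v\in B}\norm{u-v}{\LL}$, which is finite since $B$ is bounded and $\norm{\,\cdot\,}{\LL}$ is equivalent to $\norm{\,\cdot\,}{\HH}$ by Proposition~\ref{prop:LL-Hilbert}. I would then enlarge $B$ to its closed $\LL$--neighborhood $\tilde B \coloneqq \{\, w\in\HH : \inf_{b\in B}\norm{w-b}{\LL}\le d \,\}$, which is again bounded and convex, and let $\hat L_B$ be the constant furnished by Definition~\ref{def:norm-loc-lip} on $\tilde B$, so that \eqref{est:up-trap} and \eqref{est:lip-conti} both hold on $\tilde B$ with $L_{\tilde B}=\hat L_B$ (recall that \eqref{est:lip-conti} implies \eqref{est:up-trap} by integrating $\dG$ along segments, which remain in the convex $\tilde B$). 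The appearance of a fresh constant $\hat L_B$, rather than $L_B$, is exactly the price of this enlargement.

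Next, fix $u\in B$ and introduce the tilted objective $\phi(w)\coloneqq G(w)-\pairing{\dG(u)}{w}$. Subtracting a bounded linear functional alters neither the strong convexity nor the local Lipschitz smoothness, and $\delta\phi(u)=\dG(u)-\dG(u)=0$. The first step is to identify $u$ as the global minimizer of $\phi$: the lower trap \eqref{est:low-trap} applied to $G$ gives $\phi(w)\ge\phi(u)+\tfrac{\mu}{2}\norm{w-u}{\LL}^2\ge\phi(u)$ for every $w\in\HH$. The decisive step is to invoke the upper trap \eqref{est:up-trap} for $\phi$ on $\tilde B$ at base point $v\in B$ and trial point $w^\star\coloneqq v-\tfrac{1}{\hat L_B}\LLinv\bigl(\dG(v)-\dG(u)\bigr)$. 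Using $\norm{\LLinv\psi}{\LL}=\norm{\psi}{\LLinv}$ from Proposition~\ref{prop:LL-Hilbert} together with \eqref{est:lip-conti}, one checks $\norm{w^\star-v}{\LL}=\tfrac{1}{\hat L_B}\norm{\dG(v)-\dG(u)}{\LLinv}\le\norm{v-u}{\LL}\le d$, so $w^\star\in\tilde B$ and the upper trap legitimately applies. Evaluating the quadratic at $w^\star$ collapses it to $\phi(w^\star)\le\phi(v)-\tfrac{1}{2\hat L_B}\norm{\dG(v)-\dG(u)}{\LLinv}^2$, and chaining $\phi(u)\le\phi(w^\star)$ with this bound yields \eqref{est:dream-prelim} after expanding $\phi(u),\phi(v)$ and cancelling the linear terms.

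For the consequence \eqref{est:dual-lip} I would write \eqref{est:dream-prelim} once as stated and once with $u$ and $v$ interchanged (both legitimate, since the construction of $\tilde B$ and $\hat L_B$ is symmetric in $B$) and add the two inequalities; the term $G(u)+G(v)$ cancels, leaving precisely $\tfrac{1}{\hat L_B}\norm{\dG(v)-\dG(u)}{\LLinv}^2\le\pairing{\dG(v)-\dG(u)}{v-u}$. The main obstacle, absent from the global theory where the trial point may roam freely, is guaranteeing that $w^\star$ does not escape to a region where no Lipschitz bound is known; this is what forces the passage to $\tilde B$ and the norm bookkeeping above. The containment $w^\star\in\tilde B$ is secured by combining the local Lipschitz estimate \eqref{est:lip-conti} with the boundedness of $B$, while strong convexity enters through the lower trap \eqref{est:low-trap} to pin $u$ as the global minimizer of $\phi$, a step with no counterpart in a plain descent estimate.
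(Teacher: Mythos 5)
Your proof is correct and takes essentially the same route as the paper's: tilt the objective to $\tilde G(w)=G(w)-\pairing{\dG(u)}{w}$, note that $u$ is its global minimizer, take one preconditioned gradient step from $v$, and apply the upper trap \eqref{est:up-trap} on an enlarged bounded convex set that provably contains the trial point, then symmetrize to get \eqref{est:dual-lip}. Your containment bookkeeping is in fact slightly tidier: you enlarge $B$ by its $\LL$--diameter and take the full step $1/\hat L_B$, trapping $w^\star$ via $\norm{\dG(v)-\dG(u)}{\LLinv}\le \hat L_B\norm{v-u}{\LL}$, whereas the paper enlarges to a ball of radius $M_B+2s_0M_B'$ built from a bound $M_B'$ on $\dG$ over $B$ and uses a step $s\le\min\{s_0,1/L_{B'}\}$ (recovering $\hat L_B=L_{B'}$ only when $s_0\ge 1/L_{B'}$) --- and note that both your argument and the paper's implicitly invoke the gradient Lipschitz condition \eqref{est:lip-conti}, not merely Lipschitz smoothness \eqref{est:up-trap}.
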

\begin{proof}
	We follow \cite[p. 67, Thm.~2.1.5 (2.1.12)]{nesterov2018lectures}.
	Fix $u\in B$ and define $\tilde{G}(v) \coloneqq G(v)-\pairing{\dG(u)}{v}$.
	Notice that $\tilde{G}$ and $G$ share common local Lipschitz and strong convexity constants. 
	Notice also that $\dG$ maps bounded sets to bounded sets. Indeed, fixing $\norm{\dG(u)}{\LLinv}$, we have, for any $v\in B$, 
	\begin{align*}
\norm{\dG(v)}{\LLinv} & \le \norm{\dG(v) - \dG(u)}{\LLinv} + \norm{\dG(u)}{\LLinv} 
	\\
& \le L_B \norm{v-u}{\LL} +\norm{\dG(u)}{\LLinv} 
	\\
&\le L_B  (\norm{v}{\LL}+\norm{u}{\LL}) +\norm{\dG(u)}{\LLinv} 
	\\
& = 2L_B  M_B + \norm{\dG(u)}{\LLinv},
	\end{align*}
where $L_B$ is the Lipschitz constant of $\dG$ and $M_B \coloneqq \sup_{w\in B}{\normL{w}}$.

Define now $M_B' \coloneqq \sup_{w\in B}{\normLinv{\dG(w)}}$. Then, we have
	\begin{equation}\label{misc07}
			\norm{\dGtil(v)}{\LLinv} =\norm{\dG(v)-\dG(u)}{\LLinv}
			\le \norm{\dG(v)}{\LLinv} + \norm{\dG(u)}{\LLinv}\le 2M_B'.
	\end{equation}
	
	Fix now $s_0>0$ and define
	\[ 
	  B' \coloneqq \left\{w\in\HH \, \middle| \, \norm{w}{\LL}\le M_B+2 s_0 M_B' \right\}.
	\]
  Notice that $B \subset B'$, and let $ L_B'$ be the Lipschitz continuity constant of $\dG$ associated with $B'$. Then, for $s \in (0, s_0)$,
	\[
		\norm{v-s\LLinv\dGtil(v)}{\LL}\le \norm{v}{\LL} +s\norm{\dGtil(v)}{\LLinv}  \le M_B + 2 s_0 M'_B,
	\]
	i.e., $v-s\LLinv\dGtil(v) \in B'$.
	
	Notice now that $\dGtil(u)=0$ and thus $\tilde{G}$ attains its minimum at $u$. Then, since $v,\ v-s\LLinv\dGtil(v)\in B'$, for $0<s\le \min\{s_0, 1/ L_B'\}$, estimate \eqref{est:up-trap} on $B'$ implies
	\begin{align*}
G(u) &-\pairing{\dG(u)}{u} 
	\\
&= \tilde{G}(u)
	\\
& \le \tilde{G}(v-s\LLinv\dGtil(v)) 
	\\
&= G(v-s\LLinv(\dG(v)-\dG(u))) - \pairing{\dG(u)}{v-s\LLinv(\dG(v)-\dG(u))}
	\\
&\le G(v)  -s\pairing{\dG(v)}{\LLinv(\dG(v)-\dG(u))} 
	\\
& \quad +\frac{s^2 L_B'}{2} \norm{\LLinv(\dG(v)-\dG(u))}{\LL}^2 -\pairing{\dG(u)}{v} 
	\\
& \quad +s\pairing{\dG(u)}{\LLinv(\dG(v)-\dG(u))}
		\\
&= G(v) -\pairing{\dG(u)}{v} -s \left( 1-  \frac{s L_B'}{2} \right) \norm{\dG(v)-\dG(u)}{\LLinv}^2.
	\end{align*}
	Rearranging, we arrive at
	\begin{equation*}
		G(u) +\pairing{\dG(u)}{v-u} +s \left( 1-  \frac{s L_B'}{2} \right) \norm{\dG(v)-\dG(u)}{\LLinv}^2 \le G(v).
	\end{equation*}
	By setting $\hat L_B$ so that $1/(2\hat L_B) = s(1-  \frac{s L_B'}{2})>0$, we have the desired result. 

	If $ s_0 \ge 1/L'_B$, we only need to require $s\le 1/L'_B$ in the above argument, and we have $\hat L_B =  L_B'$, by setting $s=1/ L_B'$. The condition $ s_0 \ge 1/L'_B$ can always be made possible since we are free to choose $s_0$, at the expense of having larger constants $L'_B=\hat L_B$; a bigger $s_0$ makes $B'$ also bigger. 
	
	Estimate \eqref{est:dual-lip} follows by adding \eqref{est:dream-prelim} to itself with $u$ and $v$ exchanged.
\end{proof}

\begin{lem}[dual trap]\label{lem:dream}
	Let $G$ be $\mu$--strongly convex ($\mu>0$) locally Lipschitz smooth with respect to the $\LL$--norm. Then, for any $B\subset\HH$ bounded and convex, there are $\CdvB,\CddB>0$ such that, for any $u,v\in B$, we have
	\begin{equation}\label{est:dream}
		\pairing{\delta G(v)-\delta G(u)}{v-u} \ge \CdvB\norm{v-u}{\LL}^2+\CddB\norm{\delta G(v)-\delta G(u)}{\LLinv}^2,
	\end{equation}
	where $\CdvB \coloneqq(\mu^2 + 2\hat L_B)/(4\hat L_B+4\mu)$, $\CddB \coloneqq 1/(\mu+\hat L_B)$, and $\hat L_B$ is the constant from \eqref{est:dual-lip} applied to $v \mapsto G(v)-\frac \mu 4 \norm{v}{\LL}^2$.
\end{lem}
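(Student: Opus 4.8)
The plan is to imitate the classical strongly-convex argument (Nesterov, Thm.~2.1.12) but to route everything through the co-coercivity estimate \eqref{est:dual-lip} already furnished by Lemma~\ref{lem:dual-est-llip}. The idea is to subtract off exactly enough of a quadratic to ``linearize away'' the strong-convexity surplus, apply the dual lower trap to the shifted objective, and then expand the resulting $\LLinv$-norm to recover the two-term lower bound on $\pairing{\dG(v)-\dG(u)}{v-u}$.

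Concretely, I would fix a bounded convex $B$ and introduce the shifted objective $H(w)\coloneqq G(w)-\frac{\mu}{4}\norm{w}{\LL}^2$. Since $\tfrac12\norm{\,\cdot\,}{\LL}^2$ has Fr\'echet derivative $\LL(\,\cdot\,)$ by symmetry of $\LL$, we get $\delta H(w)=\dG(w)-\frac{\mu}{2}\LL w$. The crucial observation is that $H$ retains \emph{strict} strong convexity: pairing $\delta H(u)-\delta H(v)=[\dG(u)-\dG(v)]-\frac{\mu}{2}\LL(u-v)$ with $u-v$ and invoking \eqref{est:strconv-pair} for $G$ gives
\[
\pairing{\delta H(u)-\delta H(v)}{u-v}\ge\Bigl(\mu-\tfrac{\mu}{2}\Bigr)\norm{u-v}{\LL}^2=\tfrac{\mu}{2}\norm{u-v}{\LL}^2,
\]
so $H$ is $\tfrac{\mu}{2}$-strongly convex. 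This is precisely why one subtracts $\tfrac{\mu}{4}\norm{\,\cdot\,}{\LL}^2$ rather than $\tfrac{\mu}{2}\norm{\,\cdot\,}{\LL}^2$: the larger shift would leave $H$ merely convex, whereas Lemma~\ref{lem:dual-est-llip} requires strong convexity. As $H$ is also locally Lipschitz smooth ($G$ is, and the quadratic is smooth), Lemma~\ref{lem:dual-est-llip} applies to $H$ on $B$ and yields a constant $\hat L_B>0$ with $\frac{1}{\hat L_B}\norm{\delta H(v)-\delta H(u)}{\LLinv}^2\le\pairing{\delta H(v)-\delta H(u)}{v-u}$.

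Writing $a\coloneqq\dG(v)-\dG(u)$ and $w\coloneqq v-u$, so that $\delta H(v)-\delta H(u)=a-\frac{\mu}{2}\LL w$, I would then expand both sides using the two identities from Proposition~\ref{prop:LL-Hilbert}, namely $\norm{\LL w}{\LLinv}=\norm{w}{\LL}$ and $(a,\LL w)_{\LLinv}=\pairing{a}{\LLinv\LL w}=\pairing{a}{w}$. The left-hand norm becomes $\norm{a}{\LLinv}^2-\mu\pairing{a}{w}+\frac{\mu^2}{4}\norm{w}{\LL}^2$ and the right-hand pairing becomes $\pairing{a}{w}-\frac{\mu}{2}\norm{w}{\LL}^2$. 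Multiplying through by $\hat L_B$ and collecting the $\pairing{a}{w}$ terms on one side leaves
\[
(\hat L_B+\mu)\pairing{a}{w}\ge\norm{a}{\LLinv}^2+\Bigl(\tfrac{\mu^2}{4}+\tfrac{\mu\hat L_B}{2}\Bigr)\norm{w}{\LL}^2,
\]
and dividing by $\hat L_B+\mu>0$ gives \eqref{est:dream} with $\CddB=1/(\mu+\hat L_B)$ and $\CdvB=(\mu^2+2\mu\hat L_B)/(4\hat L_B+4\mu)$, both strictly positive since $\mu,\hat L_B>0$.

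Once the setup is right the argument is purely algebraic, so I do not anticipate a serious obstacle; the one genuinely delicate point is the bookkeeping just described---selecting the shift coefficient $\mu/4$ so that $H$ stays strongly convex (the hypothesis of Lemma~\ref{lem:dual-est-llip}, which would fail for shift $\mu/2$), and correctly identifying the cross term $(a,\LL w)_{\LLinv}=\pairing{a}{w}$ through the Riesz/duality relations of Proposition~\ref{prop:LL-Hilbert}. I note in passing that my computation produces the numerator $\mu^2+2\mu\hat L_B$ for $\CdvB$, i.e.\ the $\hat L_B$ term carries an extra factor $\mu$ relative to the expression written in the statement.
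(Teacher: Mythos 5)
Your proposal is correct and is essentially the paper's own proof: both shift to $\tilde G(w)\coloneqq G(w)-\frac{\mu}{4}\norm{w}{\LL}^2$ (keeping $\tilde G$ $\frac{\mu}{2}$--strongly convex so that Lemma~\ref{lem:dual-est-llip}, as stated, still applies), invoke \eqref{est:dual-lip} for $\tilde G$ on $B$, and expand the $\LLinv$--norm via $\norm{\LL w}{\LLinv}=\norm{w}{\LL}$ and $\pairing{\dG(v)-\dG(u)}{\LLinv\LL(v-u)}=\pairing{\dG(v)-\dG(u)}{v-u}$ before rearranging. You are also right about the constant: rearranging the paper's final display gives $\CdvB=(\mu^2+2\mu\hat L_B)/(4\hat L_B+4\mu)$, so the expression $(\mu^2+2\hat L_B)/(4\hat L_B+4\mu)$ in the statement is a typo, as is confirmed by homogeneity (replacing $G$ by $cG$ scales $\mu$, $\hat L_B$, and the left-hand side of \eqref{est:dream} by $c$, which forces $\CdvB$ to scale by $c$, as your formula does and the stated one does not).
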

\begin{proof}
  We adapt \cite[p.~77, Thm.~2.1.12 (2.1.32)]{nesterov2018lectures}. Let $\tilde{G}(v)=G(v)-\frac \mu 4 \norm{v}{\LL}^2$, which also satisfies the assumptions of Lemma~\ref{lem:dual-est-llip}. It is $(\frac \mu 2)$--strongly convex and $\delta\tilde{G}$ is locally Lipschitz smooth on $B$ with constant $L_B - \tfrac\mu2$. 
	Apply \eqref{est:dual-lip} to $\tilde{G}$ on $B$. Noting that $\delta \tilde{G}(v)=\dG(v)-\frac\mu 2\LL v$, we have
	\begin{align*}
& \hspace{-0.25in} \pairing{\delta G(v)-\delta G(u)}{v-u} -\frac \mu  2 \norm{v-u}{\LL}^2 
	\\
& =\pairing{\delta G(v)-\delta G(u) -\frac \mu 2\LL(v-u)}{v-u} 
		\\		
& \ge \frac{1}{\hat L_B}\norm{\delta G(v)-\delta G(u) -\frac \mu  2\LL(v-u)}{\LLinv}^2
	\\
& = \frac{1}{\hat L_B}\norm{\delta G(v)-\delta G(u)}{\LLinv}^2 + \frac{\mu^2}{4\hat L_B}\norm{v-u}{\LL}^2 -\frac{\mu}{\hat L_B}\pairing{\delta G(v)-\delta G(u)}{v-u}.
	\end{align*} 
	Rearranging, we obtain \eqref{est:dream}.
\end{proof}

\begin{rmk}[the size of constants]
	\label{rmk:CB1CB2}
Estimates of $\CdvB^{-1}$, $\CddB$, $\CdvB \CddB$ can be useful for,  e.g., step size restrictions. They can usually be obtained by elementary calculations. In particular, $\CdvB^{-1}\le 6$, $\CddB < 1/L_B$, $0 < \CdvB \CddB < 1$. Hence $0 < 1 - \CdvB \CddB < 1$, and $\CdvB \approx 1/2$ in practice.
\end{rmk}

For completeness, we recall that the preconditioned gradient descent (PGD) method with step size $\ins>0$. Given $u_0\in\HH$, compute, for $k\ge0$,
\begin{equation}\label{mthd:PGD-egy}
	u_{k+1} = u_k -\ins\LLinv \delta \tilde{F}(u_k).
\end{equation}

	\begin{thm}[geometric convergence of PGD]
	\label{thm:PGD-conv}
Let $\tilde{F}:\HH\to\RR$ be $\tilde{\mu}$--strongly convex and $\delta\tilde{F}$ be locally Lipschitz continuous with constant $\tilde{L}$. Assume that \eqref{mthd:PGD-egy} is used to minimize $\tilde{F}$. Then there is $\ins_0>0$ such that if $\ins \in (0, \ins_0]$
	\begin{equation*}
\norm{u_k-u}{\LL}\le \rho^k \norm{u_0 - u}{\LL},
	\end{equation*}
where $\rho <1$ and $u$ is the unique minimizer of $\tilde{F}$.
	\end{thm}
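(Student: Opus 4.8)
The plan is to run the classical one-step contraction argument for gradient descent, but with the dual trap \eqref{est:dream} playing the role of the usual globally Lipschitz estimates, and with an invariant-set bootstrap to keep the local constants fixed. Since $u$ is the unique minimizer, the optimality condition gives $\dFtil(u)=0$, so subtracting $u$ from \eqref{mthd:PGD-egy} yields
\[
u_{k+1}-u = (u_k-u) - \ins\LLinv\bigl(\dFtil(u_k)-\dFtil(u)\bigr).
\]
Expanding $\norm{u_{k+1}-u}{\LL}^2$ in the $\LL$--inner product and using the identities $(w,\LLinv\phi)_\LL=\pairing{\phi}{w}$ (from symmetry and $\LL\LLinv=\mathrm{id}$) and $\norm{\LLinv\phi}{\LL}=\norm{\phi}{\LLinv}$ from Proposition~\ref{prop:LL-Hilbert}, I would obtain
\[
\norm{u_{k+1}-u}{\LL}^2 = \norm{u_k-u}{\LL}^2 - 2\ins\pairing{\dFtil(u_k)-\dFtil(u)}{u_k-u} + \ins^2\norm{\dFtil(u_k)-\dFtil(u)}{\LLinv}^2.
\]

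Next I would apply the dual trap \eqref{est:dream} to $\tilde F$ on a bounded convex set $B$ containing both $u_k$ and $u$, bounding the middle term below by $\CdvB\norm{u_k-u}{\LL}^2+\CddB\norm{\dFtil(u_k)-\dFtil(u)}{\LLinv}^2$. This produces
\[
\norm{u_{k+1}-u}{\LL}^2 \le (1-2\ins\CdvB)\norm{u_k-u}{\LL}^2 + \ins(\ins-2\CddB)\norm{\dFtil(u_k)-\dFtil(u)}{\LLinv}^2.
\]
Choosing $\ins\le 2\CddB$ makes the last coefficient nonpositive, so the gradient-difference term is discarded and $\norm{u_{k+1}-u}{\LL}^2 \le (1-2\ins\CdvB)\norm{u_k-u}{\LL}^2$. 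Requiring in addition $\ins\le 1/(2\CdvB)$ keeps $1-2\ins\CdvB\in[0,1)$, so $\rho\coloneqq\sqrt{1-2\ins\CdvB}\in[0,1)$; thus $\ins_0\coloneqq\min\{2\CddB,\,1/(2\CdvB)\}$ is the desired threshold. Here the bounds of Remark~\ref{rmk:CB1CB2} on the sizes of $\CdvB,\CddB$ are convenient for making the restriction explicit.

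The main obstacle, and precisely the point where local rather than global Lipschitz smoothness matters, is that $\CdvB$ and $\CddB$ depend on $B$, so I cannot legitimately invoke \eqref{est:dream} until I know the iterates remain in a single, fixed bounded set. I would resolve this with an invariant-set bootstrap: fix $B\coloneqq\{w\in\HH : \norm{w-u}{\LL}\le\norm{u_0-u}{\LL}\}$, the closed $\LL$--ball about the minimizer through $u_0$, which is bounded, convex, and contains $u_0$. I would read off $\CdvB,\CddB$ (hence $\ins_0$ and $\rho$) from this $B$ via Lemma~\ref{lem:dream} and then argue by induction: if $u_k\in B$, then $u_k,u\in B$, the one-step contraction applies, and $\norm{u_{k+1}-u}{\LL}\le\rho\norm{u_k-u}{\LL}\le\norm{u_0-u}{\LL}$, so $u_{k+1}\in B$ as well. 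Because $B$ never grows, the constants never change, which breaks the apparent circularity between the step-size threshold and the iterates' whereabouts. Iterating the contraction over $k$ then yields $\norm{u_k-u}{\LL}\le\rho^k\norm{u_0-u}{\LL}$ with $\rho<1$, as claimed.
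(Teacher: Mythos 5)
Your proof is correct, but it is a genuinely different route from the paper's: the paper does not prove Theorem~\ref{thm:PGD-conv} at all, instead deferring to the references \cite{nesterov2018lectures, feng2017PSD, wise2020FASD}, whose standard arguments are stated for globally Lipschitz smooth objectives (and, in the classical case, in finite dimensions). You instead give a self-contained argument built on the paper's own machinery: expanding $\norm{u_{k+1}-u}{\LL}^2$ via Proposition~\ref{prop:LL-Hilbert}, lower-bounding the cross term with the dual trap \eqref{est:dream}, and discarding the gradient-difference term under $\ins\le 2\CddB$. Your one-step algebra is right, the threshold $\ins_0=\min\{2\CddB,\,1/(2\CdvB)\}$ does give $\rho=\sqrt{1-2\ins\CdvB}\in[0,1)$, and---crucially---you correctly identify the circularity that the local Lipschitz hypothesis creates ($\CdvB,\CddB$ depend on $B$, but $B$ must be known before the iterates are controlled) and break it with the invariant-ball bootstrap on $B=\{w:\norm{w-u}{\LL}\le\norm{u_0-u}{\LL}\}$, whose constants are fixed once and for all since the contraction keeps $u_{k+1}$ in $B$. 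This is exactly the device the paper itself deploys later, in Theorem~\ref{thm:invariant-set}, for the perturbed method; applying it here means your proof actually covers the theorem as stated (local Lipschitz continuity of $\delta\tilde F$, infinite dimensions, general preconditioner), which the bare citations arguably do not, whereas the paper's choice is defensible because in its application (Section~\ref{sec:PGD-Pois}) the quadratic energy \eqref{obj:poisson-egy} is $M_2$--globally Lipschitz smooth, so the cited classical results suffice there. In short: the paper buys brevity by citing the global-Lipschitz literature; your argument buys internal completeness and full generality at the cost of invoking Lemma~\ref{lem:dream} and one induction.
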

	\begin{proof}
See \cite{nesterov2018lectures, feng2017PSD, wise2020FASD}.
	\end{proof}

\section{Perturbed preconditioned gradient descent}\label{sec:PPGD-method}

Here we define our perturbed preconditioned gradient descent (PPGD) method and give a convergence result for it.

	\begin{defn}[PPGD]\label{def:PPGD-mthd}
Let $G:\HH\to\RR$ admit the decomposition \eqref{eq:GisComposite}. Given a preconditioner $\LL$, a set of solver parameters $\Theta$, and a family of gradient approximations $\left\{ A(\, \cdot \, ;\theta) \, \middle| \, \theta \in \Theta\right\} \subset \calB(\HH,\HH^*)$ we define the \emph{perturbed preconditioned gradient descent} (PPGD) method as the following iterative scheme. Given $v_0 \in \HH$ and $\s >0$, called the step size, compute
	\begin{equation}
	\label{mthd:PPGD}
\vkp = \vk - \s\LLinv(\dG(\vk) + \etak) \quad \, k\ge 0,
	\end{equation}
where, as in \eqref{def:perturbation} below,
	\begin{equation}\label{def:perturbation-PPGD}
\eta_k \coloneqq \eta_{v_k,\theta_k}\coloneqq   A(v_k;\theta_k) - \delta F(v_k),
	\end{equation}
is a perturbation of the residual at the $k$--th iteration.
	\end{defn}
	
Here $A$ is an approximation for $\delta F$. If $A = \delta F$, the perturbation $\eta_k$ above is zero, and the algorithm is just the preconditioned steepest descent method (PSD). To analyze the algorithm, we must quantify the size of perturbations and, for this, we introduce a uniform approximation property for $A$ with respect to $\delta F$. Later, in Section~\ref{sec:app-to-CHeqn}, we will show how this property is verified in applications.

	\begin{defn}[perturbations uniformly vanish]
	\label{def:unif-vanish}
Let $\Theta$ be a directed solver parameter set with ordering $\preceq$. Let 
	\[
\left\{ A(\, \cdot \, ; \theta) \, \middle| \, \theta \in \Theta \right\} \subset \calB(\HH,\HH^*)
	\]
be a family of approximation operators with respect to $\delta F$ and define the perturbation (error) with respect to this approximation as
	\begin{equation}
	\label{def:perturbation}
\eta_{v,\theta} \coloneqq  A(v;\theta) - \delta F(v) , \quad \forall \ v\in \HH.
	\end{equation}
We say that the \emph{perturbations uniformly vanish} iff, for any bounded $B\subset \HH$ and any $\epsilon > 0$, there is $\theta_0 = \theta_0(B) \in \Theta$ such that, for all $\theta_0 \preceq \theta$,
	\begin{equation*}
\sup_{v \in B} \norm{\eta_{v,\theta}}{\LLinv} \le \epsilon.
	\end{equation*}
	\end{defn}

In the previous definition, we think of $\Theta$ as the set of practically adjustable parameters in an approximation scheme. For instance, we may set $\Theta = \{(v_0, \ins, N)\} \subset \HH \times (0,\infty) \times \NN$, where $v_0$ is the initial guess, $\ins$ is the step size, and $N$ the number of iterations. The ordering may then be given by the Cartesian products of neighborhoods of the minimizer, zero, and infinity (in their respective spaces).

	\begin{thm}[invariant set]
	\label{thm:invariant-set}
Let $G$ admit the decomposition \eqref{eq:GisComposite} and $\{v_k\}_{k\ge1} \subset \HH$ be obtained via the PPGD method \eqref{mthd:PPGD} with preconditioner $\LL$, initial guess $v_0\in \HH$, and with $\left\{A(v;\theta) \, \middle| \, \theta\in \Theta \right\}$ such that perturbations, defined in \eqref{def:perturbation-PPGD}, uniformly vanish. Set $u \coloneqq \argmin_{v\in\HH}G(v)$. Let $\CdvB>0$ and $\CddB>0$ be the constants appearing in \eqref{est:dream}. Define
	\[
d_0 \coloneqq \norm{v_0 - u}{\LL}, \quad \s_0 \coloneqq  \min\{\CddB, \CdvB^{-1} \} , \quad \epsilon_0 \coloneqq \left(\frac{1}{\CdvB} +\frac{5\CddB}{4} \right)^{-1} d_0^2 \CdvB.
	\]
If the step size is chosen so that
	\begin{equation}
	\label{est:sz-cond-invariant-set}
\s \in (0, \s_0],
	\end{equation}
and the solver parameters $\{\theta_k\}_{k\ge0}$ are chosen such that the perturbations $\eta_{k}$ satisfy, as guaranteed by the assumed uniform vanishing property,
	\begin{equation}
	\label{est:pert-cond-invariant-set}
\norm{\eta_{k}}{\LLinv}^2 \le \epsilon_0.
	\end{equation}
Then, for all $k \in \polN$, the iterates of PPGD satisfy
	\begin{align}
	\label{obj:invariant-set}
\vk \in B \coloneqq \left\{ v\in\HH \, \middle| \, \norm{v-u}{\LL} \le d_0  \right\}.
	\end{align}
	\end{thm}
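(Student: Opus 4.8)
The plan is to argue by induction on $k$, keeping the ball $B$ \emph{fixed} throughout: $B$ is the $d_0$--ball around $u$, it does not change with $k$, and consequently the constants $\CdvB,\CddB$ furnished by the dual trap \eqref{est:dream} on $B$ are the \emph{same} at every iteration. The base case $k=0$ is immediate, since $\norm{v_0-u}{\LL}=d_0$ gives $v_0\in B$. For the inductive step I assume $\vk\in B$ and aim to show $\vkp\in B$. The first move is to derive the basic energy identity. Writing $r_k \coloneqq \dG(\vk)+\etak$ for the perturbed residual, the update \eqref{mthd:PPGD} reads $\vkp-u=(\vk-u)-\s\LLinv r_k$, so expanding the square in the $\LL$--inner product and using the identities $\iprdL{w}{\LLinv\phi}=\pairing{\phi}{w}$ (from symmetry of the form) and $\norm{\LLinv\phi}{\LL}=\norm{\phi}{\LLinv}$ of Proposition~\ref{prop:LL-Hilbert}, I obtain
\begin{equation*}
\norm{\vkp-u}{\LL}^2 = \norm{\vk-u}{\LL}^2 - 2\s\pairing{r_k}{\vk-u} + \s^2\norm{r_k}{\LLinv}^2 .
\end{equation*}

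Next, since $u$ minimizes $G$ we have $\dG(u)=0$, so $\pairing{\dG(\vk)}{\vk-u}=\pairing{\dG(\vk)-\dG(u)}{\vk-u}$, and because $\vk,u$ both lie in the convex set $B$, the dual trap \eqref{est:dream} applies with the fixed constants $\CdvB,\CddB$. Introducing the shorthand $d_k\coloneqq\norm{\vk-u}{\LL}$, $g_k\coloneqq\norm{\dG(\vk)}{\LLinv}$, and $e_k\coloneqq\norm{\etak}{\LLinv}$, I would split $\pairing{r_k}{\vk-u}=\pairing{\dG(\vk)}{\vk-u}+\pairing{\etak}{\vk-u}$ and bound the three contributions: the gradient pairing from below by $\CdvB d_k^2+\CddB g_k^2$ via \eqref{est:dream}; the perturbation pairing by Cauchy--Schwarz, $|\pairing{\etak}{\vk-u}|\le e_k d_k$; and the quadratic term by the triangle inequality, $\norm{r_k}{\LLinv}^2\le (g_k+e_k)^2$. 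Collecting everything leaves a quadratic form in $d_k,g_k,e_k$.

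The two hypotheses of the theorem then play complementary roles. The step-size bound $\s\le\s_0\le\CddB$ forces the net coefficient of $g_k^2$ (after a Young split of the cross term $\s^2 g_k e_k$) to be nonpositive, so that term is discarded; simultaneously $\s\le\s_0\le\CdvB^{-1}$ guarantees $1-\s\CdvB\ge0$. After a second Young inequality on the cross term $\s e_k d_k$, absorbing its $d_k^2$ part into the strong-convexity gain so the $d_k^2$ coefficient collapses to $1-\s\CdvB$, one reaches an estimate of the form
\begin{equation*}
\norm{\vkp-u}{\LL}^2 \le (1-\s\CdvB)\,d_k^2 + \s\left(\frac{1}{\CdvB}+\frac{5\CddB}{4}\right)e_k^2 ,
\end{equation*}
where the weight multiplying $e_k^2$ is precisely the reciprocal of the bracket defining $\epsilon_0$ (the constant $\tfrac54$ being an artifact of the chosen Young weights together with $\s\le\CddB$). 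Invoking the inductive hypothesis $d_k\le d_0$ and the perturbation budget $e_k^2\le\epsilon_0$, the calibration of $\epsilon_0$ yields exactly $\s\left(\tfrac{1}{\CdvB}+\tfrac{5\CddB}{4}\right)e_k^2 \le \s\CdvB d_0^2$, which cancels the gain and gives $\norm{\vkp-u}{\LL}^2\le(1-\s\CdvB)d_0^2+\s\CdvB d_0^2=d_0^2$, i.e.\ $\vkp\in B$, closing the induction.

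The hard part will be the tight bookkeeping in the previous step: because the perturbation $\etak$ destroys the descent property, $\norm{\vkp-u}{\LL}$ need not decrease, so a simple monotonicity argument is unavailable and everything rests on showing that the strong-convexity gain $\s\CdvB d_k^2$ strictly dominates the perturbation-driven growth, with the error terms fitting \emph{inside} the fixed radius $d_0$. This balance is exactly what pins down $\s_0$ and $\epsilon_0$, and the Young weights must be chosen carefully to land on the stated constants. A related subtlety, which the fixed-ball induction is designed to resolve, is the apparent circularity that the constants $\CdvB,\CddB$ of \eqref{est:dream} depend on $B$, while $B$ must be fixed before the iterates are known: defining $B$ once as the $d_0$--ball and never leaving it lets us invoke \eqref{est:dream} with the same constants at every step. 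Finally, it is the uniform vanishing property that makes the requirement $e_k^2\le\epsilon_0$ attainable, since $B$ is bounded and hence a single parameter threshold $\theta_0(B)$ controls $\sup_{v\in B}\norm{\eta_{v,\theta}}{\LLinv}$ uniformly over the a priori unknown location of $\vk$ in $B$.
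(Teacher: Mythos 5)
Your proposal is correct and follows essentially the same route as the paper's proof: a fixed-ball induction with base case $v_0\in B$, the expansion of $\norm{\vkp-u}{\LL}^2$ via Proposition~\ref{prop:LL-Hilbert}, the dual trap \eqref{est:dream} applied on the fixed set $B$ (whose constants $\CdvB,\CddB$ therefore never change), Young inequalities on the two perturbation cross terms with $\s\le\CddB$ killing the $\norm{\dG(\vk)}{\LLinv}^2$ contribution, and the calibration of $\epsilon_0$ producing the exact cancellation $(1-\s\CdvB)d_0^2+\s\CdvB d_0^2=d_0^2$. The only cosmetic difference is that you bound $\s^2\norm{\dG(\vk)+\etak}{\LLinv}^2$ by the triangle inequality before splitting, whereas the paper expands the square and estimates the cross term $\pairing{\dG(\vk)}{\LLinv\etak}$ directly; this affects only the bookkeeping of the Young weights, not the argument.
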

		
	\begin{proof}
  Set, for $k \in \polN_0$, $d_k \coloneqq \norm{v_k - u}{\LL}$. We will show inductively that $d_k \leq d_0$, with the base for induction being trivial. Fix $k \in \polN$ and suppose $\vk\in B$. The assumptions on $G$ allow us to invoke Lemma~\ref{lem:dream} over $B$ so that
	\begin{equation}\label{misc10}
		\pairing{\dG(\vk)}{\vk-u} \ge \CdvB d_{k}^2 + \CddB \norm{\dG(\vk)}{\LLinv}^2.
	\end{equation}
	In addition, we trivially have
	\begin{equation}\label{misc09}
		\pairing{\etak}{\vk -u}
		\le	\frac{1}{2\CdvB} \norm{\etak}{\LLinv}^2+  \frac{\CdvB}{2}d_{k}^2
	\end{equation}
	and
	\begin{equation}\label{misc08}
		\pairing{\dG(\vk)}{\LLinv\etak}
		\le 
		\norm{\dG(\vk)}{\LLinv}^2+ \frac{1}{4} \norm{\etak}{\LLinv}^2.
	\end{equation}
	
	Subtract $u$ from \eqref{mthd:PPGD} and take $\LL$--norms. The choice of $\s$, \eqref{misc10}, \eqref{misc09}, and \eqref{misc08} yield,
	\begin{equation}
	\label{misc13}
	  \begin{aligned}
      d_{k+1}^2 &=\norm{\vk-u -\s\LLinv \delta G(\vk)-\s\LLinv\etak}{\LL}^2
      \\
      &= d_{k}^2 + \s^2\norm{\delta G(\vk)}{\LLinv}^2 + \s^2\norm{\etak}{\LLinv}^2 -2\s\pairing{\dG(\vk)}{\vk-u}
	\\
& \quad - 2\s\pairing{\etak}{\vk-u} +\s^2\pairing{\dG(\vk)}{\LLinv\etak}
      \\
      &\le (1-2\s \CdvB) d_{k}^2 - \s(2\CddB -\s)\norm{\delta G(\vk)}{\LLinv}^2 + \s^2\norm{\etak}{\LLinv}^2
	\\
& \quad - 2\s\pairing{\etak}{\vk-u} +\s^2\pairing{\dG(\vk)}{\LLinv\etak}
      \\
      &\le \left(1-\s\CdvB \right)d_{k}^2 - 2\s\left(\CddB -\s\right)\norm{\delta G(\vk)}{\LLinv}^2
	\\
& \quad +\s\left(\frac{1}{\CdvB} +\frac 5 4 \s\right)\norm{\etak}{\LLinv}^2
      \\
      &\le \left(1-\s\CdvB \right)d_{k}^2  + \s \left(\frac{1}{\CdvB} +\frac{5\CddB}{4} \right)\norm{\etak}{\LLinv}^2,
	  \end{aligned}
	\end{equation}
	where $\s \le \CddB$ was used in the last step and $0<\s \le \CdvB^{-1}$ ensures $0 \le \left(1-\s\CdvB \right) <1$.

	Now, since the perturbations uniformly vanish, for the bounded set $B$, we can choose $\theta_k$ such that \eqref{est:pert-cond-invariant-set} holds.
	Thus, we continue estimate \eqref{misc13} by using \eqref{est:pert-cond-invariant-set} and $\s \le \CdvB^{-1}$
	\[
		d_{k+1}^2 
		\le \left(1 - \s\CdvB \right) d_k^2 
		+
		d_0^2 \s\CdvB
		\le \left(1-\s\CdvB \right)d_0^2 +d_0^2 \s\CdvB
		= d_0^2.
	\]
	Therefore, we have $d_{k+1} \in B$, which completes the proof.
\end{proof}

\begin{rmk}[step size]
	In view of Remark~\ref{rmk:CB1CB2}, the step size restriction \eqref{est:sz-cond-invariant-set} is comparable to the classic one: $\s \le 1/L_B$.
\end{rmk}

\begin{thm}[convergence of PPGD]\label{thm:PPGD-conv-abs}
  Let $G$ admit the decomposition \eqref{eq:GisComposite} and $\{v_k\}_{k\ge1} \subset \HH$ be obtained via the PPGD method \eqref{mthd:PPGD} with preconditioner $\LL$, initial guess $v_0\in \HH$, and with $\left\{A(v;\theta) \, \middle| \, \theta\in \Theta \right\}$ such that perturbations, defined in \eqref{def:perturbation-PPGD}, uniformly vanish.
	Set $u \coloneqq \argmin_{v\in\HH}G(v)$, $d_0=\norm{v_0 - u}{\LL}$, and $\rho=1-\mu\s \in (0,1)$.
  If the step size $\s$ satisfies \eqref{est:sz-cond-invariant-set}, then the best approximation, $z_k$, up to $k$--th iterates, i.e., $z_k=\argmin_{i=0}^{k} \normL{v_k-u}$, satisfies
	\begin{align}\label{est:PPGD-err-conv}
		\norm{z_k - u}{\LL}^2  \le \frac{d_{0}^2}{\mu\s} \rho^{k}  +  \epsilon d_0,
	\end{align}
	where the iterates $v_k$'s involve perturbations satisfying \eqref{est:pert-cond-invariant-set}. 
\end{thm}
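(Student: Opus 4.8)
The plan is to combine the invariant-set guarantee of Theorem~\ref{thm:invariant-set} with a one-step contraction estimate and then unroll the resulting recursion. Since Theorem~\ref{thm:invariant-set} already ensures that every iterate lies in $B = \{v \mid \norm{v-u}{\LL}\le d_0\}$, I may use the bound $d_k \coloneqq \norm{v_k-u}{\LL}\le d_0$ at every step, and I may legitimately invoke Lemma~\ref{lem:dream} (the dual trap) on $B$. I will also use first-order optimality of the minimizer, $\dG(u)=0$, which turns the strong-convexity inequality \eqref{est:strconv-pair} into the usable form $\pairing{\dG(v_k)}{v_k-u}\ge\mu d_k^2$.

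First I would subtract $u$ from the update \eqref{mthd:PPGD} and expand $\norm{\,\cdot\,}{\LL}^2$, exactly as in the proof of Theorem~\ref{thm:invariant-set}, to obtain
\begin{equation*}
  d_{k+1}^2 = d_k^2 - 2\s\pairing{\dG(v_k)}{v_k-u} + \s^2\norm{\dG(v_k)}{\LLinv}^2 + \big[\text{perturbation terms}\big].
\end{equation*}
The three displayed terms are the unperturbed gradient step. The key move is to absorb $\s^2\norm{\dG(v_k)}{\LLinv}^2$ into $-2\s\pairing{\dG(v_k)}{v_k-u}$ by using the dual-trap consequence $\norm{\dG(v_k)}{\LLinv}^2\le\CddB^{-1}\pairing{\dG(v_k)}{v_k-u}$ together with the step-size restriction $\s\le\CddB$; what survives is $-\s\pairing{\dG(v_k)}{v_k-u}$, and strong convexity then yields the contraction factor $\rho=1-\mu\s$. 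This is where $\rho$ in the statement comes from: it is the analogue of the argument in Theorem~\ref{thm:invariant-set} that used $\CdvB$, but now sharpened to the strong-convexity constant $\mu$.

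Next I would control the remaining perturbation terms, namely $\s^2\norm{\etak}{\LLinv}^2$, the mixed gradient--perturbation term $\pairing{\dG(v_k)}{\LLinv\etak}$, and the indefinite-sign term $-2\s\pairing{\etak}{v_k-u}$, by Cauchy--Schwarz and Young's inequality, reabsorbing any residual $\norm{\dG(v_k)}{\LLinv}^2$ as above (at the mild cost of a slightly smaller step-size constant). Using $d_k\le d_0$ and the uniform-vanishing bound $\norm{\etak}{\LLinv}\le\epsilon$ from Definition~\ref{def:unif-vanish}, this produces a one-step recursion of the schematic form $d_{k+1}^2\le\rho\,d_k^2 + C\s\epsilon d_0$. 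Unrolling and summing the geometric series, with $\sum_{j\ge0}\rho^{j}=(1-\rho)^{-1}=(\mu\s)^{-1}$, yields a bound $d_k^2\le\rho^k d_0^2 + C\mu^{-1}\epsilon d_0$ of the claimed shape \eqref{est:PPGD-err-conv}; the factor $(\mu\s)^{-1}$ in the geometric term is precisely this $(1-\rho)^{-1}$. Finally, since $\norm{z_k-u}{\LL}=\min_{0\le i\le k} d_i\le d_k$, the estimate for $d_k$ transfers immediately to the best approximation $z_k$.

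The main obstacle I anticipate is the indefinite-sign perturbation term $-2\s\pairing{\etak}{v_k-u}$ together with the mixed term: these cannot be made to decrease the objective at every step, so PPGD is genuinely \emph{not} a descent method and no per-iterate monotonicity of $d_k$ is available. This is exactly why both the invariant-set property (to guarantee $d_k\le d_0$ and to legitimize Lemma~\ref{lem:dream} on $B$) and the best-approximation formulation are needed, and why one can only recover a linear rate up to the residual error $\epsilon d_0$ rather than exact convergence. The secondary technical nuisance is keeping the absorption of the gradient-norm terms compatible with both the required contraction factor $\rho$ and the step-size window \eqref{est:sz-cond-invariant-set}, which forces some care with the Young's-inequality weights but does not change the order of the step-size restriction.
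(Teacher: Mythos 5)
Your proposal is correct in substance but follows a genuinely different route from the paper. You derive a one-step \emph{distance} recursion $d_{k+1}^2\le\rho d_k^2+C\s\epsilon d_0$ by absorbing $\s^2\norm{\dG(v_k)}{\LLinv}^2$ into $-2\s\pairing{\dG(v_k)}{v_k-u}$ via the dual trap of Lemma~\ref{lem:dream} together with $\s\le\CddB$, then unroll the geometric series. The paper instead works through the \emph{energy gaps} $\Delta G_k=G(v_k)-G(u)$, in the style of Devolder et al.: it combines the upper trap \eqref{est:up-trap} and lower trap \eqref{est:low-trap} (this is where $\s\le 1/L_B$ enters, rather than Lemma~\ref{lem:dream}) to get $d_{k+1}^2\le\rho d_k^2+2\s e_{k+1}-2\s\Delta G_{k+1}$, telescopes the weighted sum $\sum_i\rho^i\Delta G_{k-i}$, selects $z_k$ as the iterate with the smallest \emph{energy} gap, and converts back to the $\LL$--distance through strong convexity --- which is why its geometric term carries the amplification $d_0^2/(\mu\s)$. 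Your route handles the stated $z_k$ (distance-best) directly via $\min_{0\le i\le k}d_i\le d_k$ and in fact yields the sharper geometric term $\rho^k d_0^2$; what the paper's route buys is that the mixed gradient--perturbation term never needs to be weighed against $\norm{\dG(v_k)}{\LLinv}^2$, since every term is paired with $v_{k+1}-u$ and packaged by the two traps. Both proofs lean on Theorem~\ref{thm:invariant-set} in the same way, to confine the iterates to $B$ and fix a single constant there.

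One caveat in your write-up needs repair: you propose to reabsorb the residual $\norm{\dG(v_k)}{\LLinv}^2$ arising from the mixed term ``at the mild cost of a slightly smaller step-size constant.'' That is not available to you, because the theorem asserts the estimate on the full window \eqref{est:sz-cond-invariant-set}, and at $\s=\CddB$ the absorption $\s^2\norm{\dG(v_k)}{\LLinv}^2\le\s\pairing{\dG(v_k)}{v_k-u}$ leaves no slack for a second helping. The clean fix stays inside your framework: on the invariant set, $\norm{\dG(v_k)}{\LLinv}=\norm{\dG(v_k)-\dG(u)}{\LLinv}\le L_B\,d_0$ by local Lipschitz continuity and $\dG(u)=0$, so the mixed term is bounded by $2\s^2 L_B d_0\epsilon$ outright, with no reabsorption and no shrinkage of $\s$. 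With that, your recursion closes with $\rho=1-\mu\s$ exactly and an error of order $\epsilon d_0$ (constants depending on $\mu$, $L_B$, $\s$), matching the shape of \eqref{est:PPGD-err-conv}. The residual mismatch in constants should not trouble you: the paper's own argument actually produces $2\epsilon d_0/\mu$ rather than $\epsilon d_0$ in the error term, so on that score the two proofs are equally loose.
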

\begin{proof}
	Note that, by Theorem~\ref{thm:invariant-set}, we have $\{v_k\}_{k=0}^\infty \subset B$, where $B$ was defined in \eqref{obj:invariant-set}. As a consequence the same Lipschitz constant, $L_B$, can be used for all iterates.
	
	The proof adapts the arguments of \cite{devolder2013exactness}. Set, for $k \in \polN_0$,
	\[
	  d_k=\norm{v_k - u}{\LL}, \qquad \Delta G_k\coloneqq G(v_k)-G(u)\ge 0, \qquad e_{k+1}=\pairing{\etak}{u - \vkp}.
	\]
	Observe that
	\begin{equation}\label{misc30}
		d_{k+1}^2 =  d_{k}^2 -\norm{\vkp - \vk}{\LL}^2 +2\iprd{\vkp -\vk}{\vkp - u}_\LL.
	\end{equation}
	Take the inner product of \eqref{mthd:PPGD} with $\vkp - u$ to obtain
	\begin{equation}\label{misc31}
		\iprdL{\vkp - \vk}{\vkp-u}=\s\pairing{\dG(v_k)}{u - \vkp}+\s\pairing{\etak}{u - \vkp}.
	\end{equation}
	We may then combine \eqref{est:up-trap}, \eqref{est:low-trap}, and $\dG(u) = 0$ to conclude
	\begin{equation}\label{misc35}
    \begin{aligned}
      \pairing{\dG(v_k)}{u - \vkp} &= \pairing{\dG(v_k)}{u - \vk} + \pairing{\dG(v_k)}{\vk - \vkp} 
      \\
      & \le G(u) - G(\vkp) - \frac{\mu}{2} d_{k}^2 + \frac{L_B}{2} \norm{\vkp - \vk}{\LL}^2.
		\end{aligned}
	\end{equation}

	Using \eqref{misc31} and \eqref{misc35} we may rewrite \eqref{misc30} as
	\begin{align*}
		d_{k+1}^2 &=  d_{k}^2 -\norm{\vkp - \vk}{\LL}^2 +2\s\pairing{\dG(v_k)}{u - \vkp}+2\s\pairing{\etak}{u - \vkp}
			\\
		&\le (1-\mu\s) d_{k}^2 +2\s\left(G(u) - G(\vkp)\right) -(1-\s L_B) \norm{\vkp - \vk}{\LL}^2 
		\\
			&+  2\s\pairing{\etak}{u - \vkp}.
	\end{align*}
	Note that from \eqref{est:sz-cond-invariant-set} and Remark~\ref{rmk:CB1CB2} we have $\s \le 1/L_B$. The previous inequality then implies the key estimate
	\begin{align}\label{misc37}
		d_{k+1}^2 \le \rho d_{k}^2 +  2\s e_{k+1} -2\s \Delta G_{k+1} ,
	\end{align}
	which, when iterated, implies
	\[ 
	  d_k^2 \leq \rho^{k} d_{0}^2 +  2\s \sum_{i=0}^{k-1} \rho^i e_{k-i}  - 2\s \sum_{i=0}^{k-1} \rho^i  \Delta G_{k-i}.
	\]
	Rearranging we obtain
	\begin{equation}\label{misc39}
		\sum_{i=0}^{k-1} \rho^i  \Delta G_{k-i} \le \sum_{i=0}^{k-1} \rho^i  \Delta G_{k-i} + \frac{d_{k}^2}{2\s} \le \rho^{k} \frac{d_{0}^2}{2\s} +  \sum_{i=0}^{k-1} \rho^i e_{k-i}.
	\end{equation}
	
	Define $z_k \subset \{v_i\}_{i=0}^k$ as
	\[ 
	  z_k \coloneqq \argmin_{i=1}^k \Delta G_{k-i}
	\]
  and use \eqref{est:low-trap} to estimate
  \[
    \frac\mu2 \norm{z_k - u}{\LL}^2 \le G(z_k)-G(u).
  \]
  This, together with the fact that, for any $v \in B$, $\langle \eta_i, u - v \rangle \leq \epsilon d_0$, yields
	\begin{equation*}
		\frac{\mu}{2}\norm{z_k - u}{\LL}^2\sum_{i=0}^{k-1} \rho^i   \le \sum_{i=0}^{k-1} \rho^i  \Delta G_{k-i} \le \rho^{k} \frac{d_{0}^2}{2\s} +  \epsilon d_0 \sum_{i=0}^{k-1} \rho^i ,
	\end{equation*}
	which readily implies \eqref{est:PPGD-err-conv}.
\end{proof}

\begin{rmk}[convergence]
  Regarding Theorem~\ref{thm:PPGD-conv-abs}:
	\begin{enumerate}[1.]
		\item The error estimate \eqref{est:PPGD-err-conv} is essentially the same as \cite[p.~10 Thm.~4]{devolder2013strongconv} or \cite[p.~156 Thm.~5.4]{devolder2013exactness}. The additional factor $d_0$ in the second term can be explained by the way we quantify inexactness.
 
	\item By modifying \eqref{est:pert-cond-invariant-set}, a linear convergence rate \emph{without error} may be obtained. For example, if 
	\begin{equation*}
		\norm{\eta_{k}}{\LLinv}^2
		\le \min\left\{
		\left(\frac{1}{\CdvB} +\frac{5\CddB}{4} \right)^{-1} d_0^2 \CdvB, \frac{\mu}{4d_0} d_k^2  \right\},
	\end{equation*}
	we obtain, instead of \eqref{misc37},
		\begin{equation*}
      d_{k+1}^2 \le \rho d_{k}^2 +  2\s e_{k+1} \le \left( 1-\frac{\mu\s}{2} \right)d_k^2.
	\end{equation*}
	This requires a linear decay of the perturbations, which aligns with \cite{schmidt2011convergence}. 
	\end{enumerate}
\end{rmk}

\section{Application: The stationary Cahn-Hilliard equations with variable mobility}\label{sec:app-to-CHeqn}

\subsection{Motivation}
The stationary CH equations with a variable mobility arise after time discretization of the CH equations along with so-called \emph{convexity splitting} \cite{eyre1998unconditionally}. 
These equations are an important model that is used in many physical and biological phenomena, including spinodal decomposition \cite{cahn1961spinodal}, diblock copolymer \cite{choksi2009phase}, two-phase fluid flows \cite{badalassi2003computation}, and tumor growth \cite{wise2008three}, to name a few.
Its numerical implementation, however, is challenging due to their high order and nonlinear structure. Such a challenge becomes greater when the \emph{mobility} is not constant. In particular, in the context of materials science applications, where one usually imposes periodic boundary conditions, a variable mobility prevents the use of FFT for fast inversion of the ensuing linear systems. Here we will show how the PPGD allows us to decompose the computational difficulties: the nonlinearity corresponds to $E$ in \eqref{eq:GisComposite} whereas $F$ encodes a linear, but challenging component stemming from the variable mobility.

\subsection{Notation and settings}
Let $d \in \{2,3\}$ and $\Omega = (0,\ell)^d$, where $\ell>0$. For conciseness we will write $L^2 \coloneqq L^2(\Omega)$ and
\[
  L^2_0 = \left\{ f \in L^2 \, \middle| \, \mean{f} = 0 \right\},
\]
where $\mean{f}=\frac{1}{L^d} \int_\Omega f(\x)\dx$.
The $L^2$-inner product and norm are $(\cdot,\cdot)$ and $\norm{ \, \cdot \, }{}$, respectively. $\Ho$ is the subspace of functions in $H^1(\Omega)$ that are periodic, and $\Homz = \Ho \cap L^2_0$.
Their inner-products are
\[
(u,v)_\Homz = (\nabla u, \nabla v) = \sum_{i=1}^d (\partial_i u , \partial_i v), \quad \text{and} \quad (u,v)_\Ho = (u,v) + (u, v)_\Homz,
\]
respectively. We recall the following Poincar\'e inequality: there is $C_P>0$ such that
\begin{equation}
	\label{eq:PoincareMeanZero}
	\| v \| \leq C_P \| v \|_{\Homz}, \qquad \forall v \in \Homz.
\end{equation}
Define
\begin{align}
	\Hm &\coloneqq \left\{\phi \in (\Ho)^* \, \middle| \, \langle \phi , 1 \rangle = 0  \right\},
	\quad
  \norm{\phi}{\Hm} \coloneqq  \sup_{0\ne  v\in\Homz} \frac{\langle \phi ,  v\rangle}{\norm{ v}{\Homz}} .
	\label{def:Hm}
\end{align}
Note that $(\Homz, \norm{\slot}{\Homz})^* \cong (\Hm, \norm{\slot}{\Hm})$.

The following result defines an inner product on $\Hm$. Its proof is standard.

\begin{prop}[Riesz isometry]\label{prop:Riesz}
  Given $\phi\in \Hm$, let $\mathsf{T}(\phi) \in \Homz$, solve
	\begin{equation}
		\iprd{\nabla \mathsf{T}(\phi)}{\nabla v} = \langle \phi,  v\rangle \qquad \forall  v\in \Homz.
		\label{eqn:poisson-weak}
	\end{equation}
	This defines $\mathsf{T} \in \calB(\Hm,\Homz)$. In addition, the bilinear form 
  \begin{equation}
    \left(\phi,\xi\right)_{\Hm} \coloneqq \iprd{\nabla \mathsf{T}(\phi)}{\nabla\mathsf{T}(\xi)} =\langle\phi,\mathsf{T}(\xi)\rangle  = \langle \xi, \mathsf{T}(\phi)\rangle.
    \label{obj:Hm-ip}
  \end{equation}
  defines an inner product on $\Hm$, and the induced norm coincides with $\norm{ \, \cdot \, }{\Hm}$.
\end{prop}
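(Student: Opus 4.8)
The plan is to recognize $\mathsf{T}$ as the Riesz map of the Hilbert space $(\Homz, (\cdot,\cdot)_{\Homz})$ and then transport its properties to $\Hm$. First I would check that $(\cdot,\cdot)_{\Homz} = (\nabla\,\cdot\,,\nabla\,\cdot\,)$ is genuinely an inner product on $\Homz$; the only nontrivial axiom is definiteness, which is precisely the content of the Poincar\'e inequality \eqref{eq:PoincareMeanZero}, since $\|\nabla v\| = 0$ forces $\|v\| = 0$ and hence $v=0$. As $\Homz$ is complete under this norm, it is a Hilbert space. Next, for fixed $\phi \in \Hm$, its restriction to $\Homz$ is a bounded linear functional of norm $\norm{\phi}{\Hm}$ by \eqref{def:Hm}, so the Riesz representation theorem produces a unique $\mathsf{T}(\phi) \in \Homz$ satisfying \eqref{eqn:poisson-weak}. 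Uniqueness together with linearity of the problem gives linearity of $\mathsf{T}$, and testing \eqref{eqn:poisson-weak} with $v = \mathsf{T}(\phi)$ and applying Cauchy--Schwarz yields $\norm{\mathsf{T}(\phi)}{\Homz} \le \norm{\phi}{\Hm}$, so that $\mathsf{T} \in \calB(\Hm,\Homz)$.

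With $\mathsf{T}$ in hand, the two alternative expressions for $(\phi,\xi)_{\Hm}$ in \eqref{obj:Hm-ip} follow by testing the equation \eqref{eqn:poisson-weak} for $\mathsf{T}(\phi)$ with $v = \mathsf{T}(\xi)$ and, symmetrically, the equation for $\mathsf{T}(\xi)$ with $v = \mathsf{T}(\phi)$; the symmetry of $(\nabla\,\cdot\,,\nabla\,\cdot\,)$ reconciles the two. Bilinearity and symmetry of $(\cdot,\cdot)_{\Hm}$ are then inherited from those of the $L^2$ inner product of gradients via the linearity of $\mathsf{T}$. For positive definiteness, $(\phi,\phi)_{\Hm} = \|\nabla\mathsf{T}(\phi)\|^2 \ge 0$, and it vanishes only if $\mathsf{T}(\phi) = 0$, which by \eqref{eqn:poisson-weak} means $\langle\phi,v\rangle = 0$ for every $v \in \Homz$. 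Invoking the identification $(\Homz)^* \cong \Hm$ recorded just after \eqref{def:Hm}, this forces $\phi = 0$, completing the verification that $(\cdot,\cdot)_{\Hm}$ is an inner product.

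Finally, to see that the induced norm coincides with $\norm{\,\cdot\,}{\Hm}$, I would again combine \eqref{eqn:poisson-weak} with Cauchy--Schwarz: for any $v \in \Homz$, $\langle\phi,v\rangle = (\nabla\mathsf{T}(\phi),\nabla v) \le \norm{\mathsf{T}(\phi)}{\Homz}\norm{v}{\Homz}$, with equality attained at $v = \mathsf{T}(\phi)$, so the supremum in \eqref{def:Hm} equals $\norm{\mathsf{T}(\phi)}{\Homz} = \sqrt{(\phi,\phi)_{\Hm}}$. I expect no serious obstacle here, as this is the standard Riesz-isometry statement for the weak Laplacian; the only points demanding genuine care are the bookkeeping that makes $(\cdot,\cdot)_{\Homz}$ an honest inner product (the Poincar\'e step) and the use of the duality identification $(\Homz)^* \cong \Hm$ to upgrade nonnegativity to definiteness.
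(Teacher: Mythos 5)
Your proof is correct, and it is exactly the standard Riesz-representation argument the paper has in mind --- the paper omits the proof entirely, remarking only that ``Its proof is standard.'' The only step worth keeping an eye on, the upgrade from $\langle\phi,v\rangle=0$ for all $v\in\Homz$ to $\phi=0$, is handled legitimately by your appeal to the identification $(\Homz)^*\cong\Hm$ recorded after \eqref{def:Hm} (equivalently, by decomposing any $v\in\Ho$ as $\mean{v}+(v-\mean{v})$ and using $\langle\phi,1\rangle=0$).
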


%

\begin{rmk}[distributional identification]
	\label{rmk:identification}
	Notice that $\Homz \hookrightarrow L^2_0 \hookrightarrow \Hm$ forms a Gelfand triple. This gives a meaning to linear combinations of elements in any of these three spaces.
\end{rmk}


\subsection{The stationary Cahn-Hilliard equations with variable mobility}

Let $M : \Omega \to \RR$ be such that there are $0 <M_1 \leq M_2$ for which
\begin{equation}\label{est:mob-bd}
	M_1 \le M(\bfx) \le M_2 , \quad \forall \, \bfx \in \Omega.
\end{equation}
Define
\[
  \Delta_M v \coloneqq  \nabla\cdot\left(M \nabla v \right), \quad \forall \, v\in
\Homz.    
\]
With this notation at hand, the problem we wish to solve is as follows: given $f\in L^2$ and $u_\star\in\Homz$, find  $u\in\Homz$, $w\in\Homz$, and $C\in\polR$ that satisfy
\begin{align}
	u - u_\star & = \Delta_M w,
	\label{eqn:CH1}
	\\
	w + C & = u^3 - \Delta u - f .
	\label{eqn:CH2}
\end{align}
These are the \emph{stationary nonlinear} CH equations with variable mobility; $M$ is the \emph{mobility}, $u$ is the \emph{density field} or \emph{phase variable}, and $w$ is the \emph{chemical potential}. The chemical potential can be eliminated from the problem. By writing
\[
w = -(-\Delta_M)^{-1}\left(u-u_\star\right),
\]
and substituting in \eqref{eqn:CH1} we obtain
\begin{equation}
	\mathcal{N}(u) \coloneqq  (-\Delta_M)^{-1}\left(u - u_\star\right)-C + u^3 - \Delta u  = f.
	\label{eqn:CH3}
\end{equation}
The constant $C$ appears because of the nonlinearity and can be eliminated in the weak formulation of the problem. 

The weak form of \eqref{eqn:CH3}, which reads,
\[
  \left((-\Delta_M)^{-1} (u- u_\star), \psi  \right) + (u^3,\psi) + \left(\nabla u,\nabla\psi  \right) = (f,\psi), \quad  \forall \, \psi\in \Homz
\]
is clearly the Euler-Lagrange equation for the strictly convex energy
\begin{equation}\label{obj:CH-egy}
	G(v) \coloneqq  \frac{1}{2}\nrm{v-u_\star}_{\HMm}^2 + \frac{1}{4}\nrm{v}_{L^4}^4 +\frac{1}{2} \nrm{ v }_{\Homz}^2 - (f,v),
	\qquad 
	\forall v \in \Homz,
\end{equation}
where
\begin{equation}
\label{obj:HMo-ip}
  \nrm{ \psi }_{\HMm} \coloneqq  \sqrt{\left((-\Delta_M)^{-1} \psi,\psi \right)_{L^2}}, \quad \forall \, \psi\in \mathring{L}^2.
\end{equation}

\subsection{PGD for a non-constant coefficient elliptic problem}
\label{sec:PGD-Pois}

Rewriting the stationary CH problem as \eqref{eqn:CH3} requires the inversion of a non-constant coefficient elliptic operator. Namely, given $M\in L^\infty$ that satisfies \eqref{est:mob-bd} and $\phi\in\Hm$, find $u_\phi \in\Homz$ such that
\begin{equation}
	(M\nabla u_\phi, \nabla v)_{} = \langle \phi, v\rangle,  \qquad \forall v\in\Homz.
	\label{eqn:lapm-weak}
\end{equation}

To approximate the solution to this problem, we employ the PGD method \eqref{mthd:PGD-egy} with the Laplacian as preconditioner. In this setting, given $u_0\in\Homz$, this method computes, for $n \ge 0$,
	\begin{align}
	r_n &= \phi + \Delta_M u_{n} ,
	\label{mthd:PGD-Pois-res}
	\\
	d_n &= (-\Delta)^{-1}r_n,
	\label{mthd:PGD-Pois-sd}
	\\
u_{n+1} & = u_{n} + \ins_n d_n .
	\label{mthd:PGD-Pois}
	\end{align}
where $\ins_n>0$ is the step size.

It is a trivial observation to note that
\begin{equation*}
  (u, v)_{\HMo}\coloneqq (M\nabla u, \nabla v)_{}
\end{equation*}
is an inner product on $\Homz$ and the induced norm satisfies, for all $u\in\Homz$,
\begin{equation}
  \sqrt{M_1} \|u\|_{\Homz} \le \|u\|_{\HMo} \le \sqrt{M_2} \|u\|_{\Homz}.
\label{est:H1-norm-equival}
\end{equation}
By duality, we also have
\begin{equation}
\label{est:-1nrm-equiv}
  \frac1{\sqrt{M_2}} \| \phi \|_{\Hm}\leq  \| \phi \|_{\HMm} \leq \frac1{\sqrt{M_1}} \|\phi \|_{\Hm}, \quad \forall \phi \in \Hm.
\end{equation}

With this notation, we write the energy associated to \eqref{eqn:lapm-weak}
\begin{equation}
	\tilde F(v) = \half \norm{v}{\HMo}^2 - \langle \phi, v \rangle \qquad \forall v\in\Homz.
	\label{obj:poisson-egy}
\end{equation}
Since this energy is quadratic, we can compute the optimal step size. Namely,
\begin{align}\label{eql:optimal-step}
  \ins_{n,opt}=-\frac{\left(M\nabla u_n,\nabla d_n\right) -\left\langle \phi,d_n \right \rangle }{\left(M\nabla d_n,\nabla d_n\right)}.
\end{align}
Observe also that $\tilde{F}$ is $M_1$--strongly convex and $M_2$--globally Lipschitz smooth. With these results at hand, we may apply Theorem~\ref{thm:PGD-conv} to assert convergence of PGD. The following result recasts this convergence in a manner that will be useful to establish the uniform vanishing property for perturbations.

\begin{cor}[perturbations uniformly vanish]\label{cor:unif-pertb}
  Let $u_0 \in \Homz$ and assume that the sequence $\{u_n\}_{n\geq 1} \subset \Homz$ is computed by \eqref{mthd:PGD-Pois} with a suitable $\{\ins_n\}_{n \geq 0}$, which depends on $M_1$ and $M_2$, that guarantees convergence. Then, for every $\epsilon >0$ there is $N \geq \polN$ that depends only on $\epsilon$, $M_1$, $M_2$, $\|\phi\|_{\Hm}$ and $\| u_0 \| \in \Homz$ such that
	\begin{equation}\label{est:PGD-Pois-unif-eps}
		\norm{u_{\phi,n}-u_{\phi}}{\Homz}\le \epsilon
	\end{equation}
	whenever $n\ge N$, where $u_{\phi}$ and $u_{\phi,n}$ are the solution to the Poisson equation \eqref{eqn:lapm-weak} and the $n$-th iterate of PGD computed by \eqref{mthd:PGD-Pois}.
\end{cor}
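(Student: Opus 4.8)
The plan is to reduce the claim to the geometric convergence result Theorem~\ref{thm:PGD-conv} and then convert the geometric rate into an explicit, uniform iteration count. First I would observe that the scheme \eqref{mthd:PGD-Pois} is precisely the PGD method \eqref{mthd:PGD-egy} applied to the quadratic energy $\tilde F$ in \eqref{obj:poisson-egy} with the Laplacian as preconditioner, so that $\LL = -\Delta$ and hence $\norm{\slot}{\LL} = \norm{\slot}{\Homz}$; indeed $\delta \tilde F(u_n) = -(\phi + \Delta_M u_n) = -r_n$, so $-\LLinv\delta\tilde F(u_n) = d_n$ and \eqref{mthd:PGD-Pois} coincides with \eqref{mthd:PGD-egy}. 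Since $\tilde F$ is $M_1$--strongly convex and $M_2$--globally Lipschitz smooth, Theorem~\ref{thm:PGD-conv} applies and furnishes a contraction factor $\rho<1$ together with the estimate
\[
  \norm{u_{\phi,n} - u_\phi}{\Homz} \le \rho^n \norm{u_0 - u_\phi}{\Homz}.
\]

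The crucial point to verify is that $\rho$ depends only on $M_1$ and $M_2$, and not on $\phi$ or $u_0$. This holds because the admissible step-size threshold and the resulting rate in Theorem~\ref{thm:PGD-conv} are functions of the strong convexity and Lipschitz constants alone, which here are exactly $M_1$ and $M_2$; should the optimal step size \eqref{eql:optimal-step} be used instead of a fixed one, the rate can only improve, so the same $\rho$ still bounds it. This data-independence of $\rho$ is what ultimately makes the final threshold $N$ independent of the particular right-hand side.

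Next I would produce an a priori bound on the exact solution $u_\phi$. Testing the weak formulation \eqref{eqn:lapm-weak} with $v = u_\phi$ and using the lower mobility bound in \eqref{est:mob-bd} on the left together with the definition of $\norm{\slot}{\Hm}$ on the right gives $M_1 \norm{u_\phi}{\Homz}^2 \le \langle \phi, u_\phi\rangle \le \norm{\phi}{\Hm}\norm{u_\phi}{\Homz}$, whence $\norm{u_\phi}{\Homz} \le M_1^{-1}\norm{\phi}{\Hm}$. Combining this with the triangle inequality $\norm{u_0 - u_\phi}{\Homz} \le \norm{u_0}{\Homz} + M_1^{-1}\norm{\phi}{\Hm}$ yields
\[
  \norm{u_{\phi,n} - u_\phi}{\Homz} \le \rho^n \left( \norm{u_0}{\Homz} + M_1^{-1}\norm{\phi}{\Hm}\right).
\]

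Finally, to force the right-hand side below $\epsilon$, I would solve the scalar inequality $\rho^n(\norm{u_0}{\Homz} + M_1^{-1}\norm{\phi}{\Hm}) \le \epsilon$ for $n$, obtaining the explicit threshold
\[
  N = \left\lceil \frac{\log\!\left(\epsilon^{-1}\bigl(\norm{u_0}{\Homz} + M_1^{-1}\norm{\phi}{\Hm}\bigr)\right)}{\log(1/\rho)} \right\rceil.
\]
Since $\rho$ depends only on $M_1$ and $M_2$, this $N$ depends only on $\epsilon$, $M_1$, $M_2$, $\norm{\phi}{\Hm}$ and $\norm{u_0}{\Homz}$, as claimed. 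I do not anticipate a genuine obstacle; the only step demanding care is the observation in the second paragraph that the contraction factor is data-independent, because the uniformity of $N$ rests entirely upon it.
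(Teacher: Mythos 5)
Your proposal is correct and takes essentially the same route as the paper's (very brief) proof: invoke the geometric convergence of Theorem~\ref{thm:PGD-conv}, note that the contraction factor $\rho$ depends only on $M_1$ and $M_2$, and bound $\norm{u_0 - u_\phi}{\Homz}$ in terms of $\norm{u_0}{\Homz}$ and $\norm{\phi}{\Hm}$. You merely spell out details the paper leaves implicit (the identification of \eqref{mthd:PGD-Pois} with \eqref{mthd:PGD-egy}, the a priori estimate $\norm{u_\phi}{\Homz}\le M_1^{-1}\norm{\phi}{\Hm}$ obtained by testing with $u_\phi$ --- note the paper's sketch names $M_2$ here, where the natural coercivity bound involves $M_1$ --- and the explicit formula for $N$).
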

\begin{proof}
  This is nothing but the linear convergence rate of Theorem~\ref{thm:PGD-conv} together with the observations that the contraction factor, $\rho$, only depends on $M_1$, $M_2$; and $\| u \|_{\Homz}$ can be bounded by $M_2$ and $\|\phi\|_{\Hm}$.
\end{proof}

\subsection{PPGD for the stationary Cahn-Hilliard equation}

We now show how to apply the PPGD method to solve \eqref{eqn:CH3}. The main issue is that we do not wish to exactly invert the operator $-\Delta_M$. Thus, we replace it by one involving constant coefficients, which is easier to invert. This perturbation will allow us to use fast solution techniques, like FFT.

\subsubsection{Preconditioner}

As preconditioner we use
\begin{equation}\label{obj:LL}
	\LL v = \lambda(-\Delta)^{-1}v + \gamma v - \Delta v, \quad \forall \, v\in \Homz,   
\end{equation}
with $\lambda >0$ and $\gamma\ge0$. The operator $\LL$ has constant coefficients, so it can be inverted via FFT. The following are elementary results. The mapping $\LL \in \calB(\Homz,\Hm)$ is clearly a preconditioner in the sense of Definition \ref{def:precon}.

\subsubsection{Strong convexity and local Lipschitz continuity}
The energy, defined in \eqref{obj:CH-egy}, is strongly convex with locally Lipschitz continuous derivative with respect to the $\LL$--norm. To prove, this we recall (see \cite{liu1993plap}) that for $a,b\in \RR$ with $a\neq b$, $p>1$, and $\delta\ge0$ we have
\begin{align}
	\left| |a|^{p-2}a - |b|^{p-2}b\right| &\le C_3 \left| a-b\right|^{1-\delta}\left(|a|^{p-2+\delta}+|b|^{p-2+\delta} \right),
	\label{est:p-diff}
	\\
	\left( |a|^{p-2}a - |b|^{p-2}b\right)\left( a-b\right) &\ge C_4 \left| a-b\right|^{2+\delta}\left(|a|^{p-2-\delta}+|b|^{p-2-\delta} \right).
	\label{est:p-diff-ip}
\end{align}

\begin{lem}[strong convexity and local Lipschitz derivative] \label{lem:L-inf-Lip-CHegy}
	Let $\LL$ be given by \eqref{obj:LL}. The energy defined by \eqref{obj:CH-egy} is strongly convex with constant and its derivative is locally Lipschitz continuous with respect to the $\LL$--norm with constants
	\[
	  \mu \coloneqq \begin{dcases}
	                  \min\left\{\frac1{M_2 \lambda}, \frac{1-r}{\gamma C_P^2}, r \right\}, & \gamma>0,
	                  \\
	                  \min\left\{ \frac1{M_2 \lambda}, 1 \right\}, & \gamma=0,
	                \end{dcases}
    \quad
    L_B \coloneqq \max\left\{ \frac1{\lambda M_1}, 1+2C_3C_B^2C_{emb}^4 \right\},
	\]
	where $0<r<1$, $C_{emb}$ is the best constant in the embedding $\Ho \hookrightarrow L^4$ and, for $B \subset \Homz$, $C_B = \sup_{w \in B} \| w\|_{\Ho}$.
\end{lem}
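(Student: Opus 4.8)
\emph{Setup.} The plan is to first compute the Fréchet derivative of \eqref{obj:CH-egy}, namely $\dG(v) = \nlapmi(v - u_\star) + v^3 - \Delta v - f$, so that for $w \coloneqq u-v$ the difference $\dG(u)-\dG(v)$ splits into three pieces: the $\HMm$--type piece $\nlapmi w$, the cubic nonlinearity $u^3 - v^3$, and the linear piece $-\Delta w$ (the constant and $f$ terms cancel in the difference). I would also record at the outset that, with $\LL$ given by \eqref{obj:LL}, the induced norm decomposes as $\norm{w}{\LL}^2 = \lambda\norm{w}{\Hm}^2 + \gamma\norm{w}{}^2 + \norm{w}{\Homz}^2$, since $(\nlapi w, w) = \norm{w}{\Hm}^2$ and $(-\Delta w, w) = \norm{w}{\Homz}^2$. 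This decomposition into three mutually ``orthogonal'' weighted components is what drives the bookkeeping in both estimates.

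\emph{Strong convexity \eqref{est:strconv-pair}.} I would pair $\dG(u)-\dG(v)$ with $w$. The cubic term contributes $(u^3-v^3, u-v)\ge 0$ by monotonicity of $t\mapsto t^3$ and may simply be discarded; the $\nlapmi$ piece contributes $\norm{w}{\HMm}^2$ and the Laplacian contributes $\norm{w}{\Homz}^2$. The lower bound in \eqref{est:-1nrm-equiv}, $\norm{w}{\HMm}^2 \ge \tfrac1{M_2}\norm{w}{\Hm}^2$, handles the $\lambda\norm{w}{\Hm}^2$ component and forces $\mu \le 1/(M_2\lambda)$. To cover the remaining $\gamma\norm{w}{}^2$ and $\norm{w}{\Homz}^2$ components out of the single available term $\norm{w}{\Homz}^2$, I would split it as $(1-r)\norm{w}{\Homz}^2 + r\norm{w}{\Homz}^2$ for $r\in(0,1)$, bound the first summand below by $\tfrac{1-r}{C_P^2}\norm{w}{}^2$ via the Poincaré inequality \eqref{eq:PoincareMeanZero}, and keep the second against the $\Homz$ component. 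Matching the three weighted components yields exactly $\mu = \min\{1/(M_2\lambda),\,(1-r)/(\gamma C_P^2),\, r\}$ when $\gamma>0$, and the case $\gamma=0$ is the same computation with the middle constraint absent.

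\emph{Local Lipschitz bound \eqref{est:lip-conti}.} I would use the duality characterization of $\norm{\cdot}{\LLinv}$ and test $\dG(u)-\dG(v)$ against an arbitrary $z\in\Homz$, estimating the three pieces separately. The $\nlapmi$ piece gives $(\nlapmi w, z) \le \norm{w}{\HMm}\norm{z}{\HMm} \le \tfrac1{M_1}\norm{w}{\Hm}\norm{z}{\Hm}$ by Cauchy--Schwarz for the $\HMm$ inner product together with \eqref{est:-1nrm-equiv}, living purely in the $\lambda\norm{\cdot}{\Hm}^2$ component with ratio $1/(\lambda M_1)$; the Laplacian piece gives $(\nabla w, \nabla z)\le \norm{w}{\Homz}\norm{z}{\Homz}$, living in the $\Homz$ component with ratio $1$. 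The crux is the cubic term $(u^3-v^3, z)$: here I would invoke \eqref{est:p-diff} with $p=4$, $\delta=0$, i.e.\ $|u^3-v^3|\le C_3|w|(|u|^2+|v|^2)$, then Hölder with exponents $(4,2,4)$ and the Sobolev embedding $\Ho\hookrightarrow L^4$ to get $(u^3-v^3,z)\le C_3 C_{emb}^4(\norm{u}{\Ho}^2+\norm{v}{\Ho}^2)\norm{w}{\Ho}\norm{z}{\Ho}$. This is precisely where locality enters: restricting to the bounded convex set $B$ bounds $\norm{u}{\Ho},\norm{v}{\Ho}$ by $C_B$, producing the factor $2C_3C_B^2C_{emb}^4$ times $\norm{w}{\Ho}\norm{z}{\Ho}$, which is controlled by the $\Homz$ (and $L^2$) components of the $\LL$--norm (using Poincaré to absorb the $L^2$ part when $\gamma$ is small). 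Finally, since the first piece occupies the $\Hm$--component while the Laplacian and cubic pieces occupy the complementary $\Homz$/$L^2$--component, summing the two groups and applying Cauchy--Schwarz on $\RR^2$ to the component vectors converts the \emph{sum} of the two group bounds into their \emph{maximum}, yielding $L_B=\max\{1/(\lambda M_1),\,1+2C_3C_B^2C_{emb}^4\}$ after the norm-equivalence bookkeeping.

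\emph{Main obstacle.} The hard part is unquestionably the cubic term: every linear piece is exactly diagonal in the $\LL$--decomposition and contributes cleanly, whereas controlling $u^3-v^3$ requires the pointwise estimate \eqref{est:p-diff}, the Sobolev embedding, and---crucially---the restriction to a bounded set $B$, without which no Lipschitz constant exists globally. Keeping track of which weighted component each term lives in is what upgrades the naive sum of constants into the sharper maximum in the statement.
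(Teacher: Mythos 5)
Your proposal matches the paper's proof essentially step for step: the same three-way splitting of $\dG(u)-\dG(v)$, discarding the monotone cubic term and using \eqref{est:-1nrm-equiv} plus the Poincar\'e-based $(1-r)/r$ split of $\norm{w}{\Homz}^2$ for strong convexity, and for the Lipschitz bound the same duality pairing with \eqref{est:p-diff} ($p=4$, $\delta=0$), H\"older, the embedding $\Ho \hookrightarrow L^4$, the bound $C_B$ on $B$, and a component-wise Cauchy--Schwarz in the weighted $\LL$--decomposition to produce the maximum rather than the sum. Indeed, your parenthetical remark about using Poincar\'e to pass from $\norm{\cdot}{\Ho}$ to the $\Homz$/$L^2$ components is slightly more careful than the paper's own write-up, which silently identifies the two.
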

\begin{proof}
	Clearly,
	\begin{align*}
		{\delta G(u) - \delta G(v)}{u-v} &=
		\iprd{\nlapmi(u-v)}{u-v} +\iprd{u^3-v^3}{u-v}
		\\
		&+\iprd{-\Delta (u-v)}{u-v}.
	\end{align*}
	Choose $0<r<1$ if $\gamma>0$ or $r=1$ if $\gamma=0$. Then, we continue the estimate
	\begin{align*}
		\pairing{\delta G(u) - \delta G(v)}{u-v} &\ge \frac1{M_2} \norm{u-v}{\Hm}^2 +\norm{u-v}{\Homz}^2
		\\
		&\ge \frac1{M_2} \norm{u-v}{\Hm}^2 +(1-r)C_P^{-2}\norm{u-v}{}^2 +r\norm{u-v}{\Homz}^2
		\\
		&\ge \mu \norm{u-v}{\LL}^2,
	\end{align*}
	where we first used \eqref{est:p-diff-ip} to get that the nonlinear term is nonnegative and \eqref{est:-1nrm-equiv}, and then the Poincar\'e inequality. This fixes the value of $\mu$.
	
	Next, we let $B \subset \Homz$ be nonempty, bounded, and convex. Pick $0 \neq w \in \Homz$, use \eqref{est:p-diff}, Holder's inequality, and Sobolev embedding to obtain, for $u,v \in B$,
	\begin{align*}
		\pairing{\delta G(u) - \delta G(v)}{w} &\le \norm{u-v}{\HMm}\norm{w}{\HMm} +\norm{u-v}{\Homz}\norm{w}{\Homz} \\
		&+C_3\int_\Omega |u-v||w|(|u|^2+|v|^2)
		\\
		&\le \norm{u-v}{\HMm}\norm{w}{\HMm} +\norm{u-v}{\Homz}\norm{w}{\Homz}
		\\
		& + 2C_3 C_{emb}^2 C_B^2\norm{u-v}{L^4}\norm{w}{L^4}
		\\
		&\le \norm{u-v}{\HMm}\norm{w}{\HMm}
		\\
		&+(1+2C_3C_B^2C_{emb}^4)\norm{u-v}{\Homz}\norm{w}{\Homz}
		\\
		&\le L_B\norm{u-v}{\LL}\norm{w}{\LL},
		\nonumber
	\end{align*}
  which fixes the value of $L_B$.
\end{proof}

\subsubsection{The PPGD}

\begin{table}[h!]
	\centering
	\begin{tabular}{@{}cc@{}}
		\toprule
		\textbf{Abstract setting \eqref{eq:GisComposite}} & \textbf{Stationary CH equation \eqref{obj:CH-egy}} \\
		\midrule
		$G$                 & $G$ given by \eqref{obj:CH-egy} \\
		$E$                 & $\frac{1}{4}\nrm{v}_{L^4}^4 + \frac{1}{2} \nrm{\nabla v }_{L^2}^2 - (f,v)_{L^2}$ \\
		$F$                 & $\frac{1}{2}\nrm{v - u_\star}_{\HMm}^2$ \\
		Preconditioner      & $\LL$ defined by \eqref{obj:LL} \\
		Approximation scheme & PGD defined by \eqref{mthd:PGD-Pois-res}--\eqref{mthd:PGD-Pois} \\
		\bottomrule
	\end{tabular}
	\caption{Correspondence between the abstract PPGD and the stationary CH equation settings.}
	\label{tab:PPGD-setting}
\end{table}

Let us now specify the components of the PPGD method \eqref{mthd:PPGD} when applied to the objective $G$, defined in \eqref{obj:CH-egy}. The perturbation $\eta_k$ will be
\begin{equation}\label{obj:pert-CH}
\etak=A(\vk - u_{\star,k}; \theta_k)-(-\Delta_M)^{-1}\left(\vk - u_{\star}\right),
	\end{equation}
where $A(\vk - u_{\star,k}; \theta_k)$ is the approximate solution of \eqref{eqn:lapm-weak} with $\phi=\vk-u_\star$ using \eqref{mthd:PGD-Pois} with parameter settings $\theta_k \in \Theta = \{(\vk, \ins,N) \} \subset \Homz \times (0,\infty) \times \polN$; i.e., $\vk$ is used as the initial guess for the approximation. Then, the perturbed residual reads
\begin{align}\label{obj:pert-res}
	\tilde r_{k} = -\dG(v_k)-\eta_{k} = f-A(\vk - u_{\star,k}; \theta_k) - v_k^3 + \Delta v_k.
\end{align}
This corresponds to the decomposition shown in Table~\ref{tab:PPGD-setting}. The method is described in Algorithm~\ref{alg:PPGD}.

\RestyleAlgo{ruled}
\SetKwComment{Comment}{/* }{ */}
\begin{algorithm}[hbt!]
	\caption{Perturbed preconditioned gradient descent method (PPGD) applied to stationary Cahn-Hilliard equation with variable mobility.}\label{alg:PPGD}
	\KwData{$f, u_\star, v_0 \neq 0$ \Comment*[r]{problem data and initial guess}}
	\KwResult{$\tilde u$  \Comment*[r]{approximate solution}}
	\textbf{Initialization}
	$\s>0$ \Comment*[r]{step size for outer loop}
	\While{$\| v_k \|_\infty > \mathrm{Tol}_{\mathrm{o}}$}{
		$\phi \gets v_k - u_\star$ \Comment*[r]{prepare inner loop}
		$v_{k,0} \gets v_k$\;
		\While{
			$\| v_{k,n} \|_\infty > \mathrm{Tol}_{\mathrm{i}}$
		}{
			$r \gets \phi + \lapm v_{k,n}$ \Comment*[r]{inner residual}
			$d \gets (-\Delta)^{-1} r$ \Comment*[r]{compute search direction}
			$\ins\gets\argmin_{\ins>0}{\tilde F(v_{k,n} + \ins d)}$ \Comment*[r]{Compute $\ins$ using \eqref{eql:optimal-step}}
			$v_{k,n+1} \gets v_{k,n} + \ins d$\Comment*[r]{descent step}
		}
		$\zeta \gets v_{k,n+1}$\Comment*[r]{store perturbation}
		$\tilde r = f - \zeta - \vk^3 + \Delta \vk$ \Comment*[r]{perturbed residual}
		$\tilde d \gets \LLinv \tilde r$ \Comment*[r]{compute search direction}
		
		$v_{k+1} \gets v_{k} + \s \tilde d$ \Comment*[r]{descent step with perturbation}
	}
	$\tilde u \gets v_{k+1}$ \Comment*[r]{output approximate solution}
\end{algorithm}
%

The convergence properties of Algorithm~\ref{alg:PPGD} are summarized below.
 
\begin{thm}[convergence of PPGD]\label{thm:PPGD-conv}
  Let $G$ be defined in \eqref{obj:CH-egy} and $u$ be its minimizer. Suppose that the sequence $\{v_k\}_{k \geq 1}$ is obtained via \eqref{mthd:PPGD} with preconditioner \eqref{obj:LL}, and the perturbed residuals \eqref{obj:pert-res} with perturbations \eqref{obj:pert-CH}.
	Then, the PPGD method converges to $u$ in the sense that, for any $\epsilon>0$, there exists $\s_0 > 0$ such that, if $\s\in(0, \s_0]$, there exist $\hat K \in\NN$ and $\{\theta_k=(\tilde{v}_k,\ins_k, N_k)\}_{k\ge0}\subset \Homz \times (0,\infty)\times \NN$ such that
	\begin{equation}
		\norm{\vk-u}{\LL}< \epsilon,
	\end{equation} 
	whenever $k \ge \hat K$.
\end{thm}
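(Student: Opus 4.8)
The plan is to realize this concrete statement as an instance of the abstract theory of Section~\ref{sec:PPGD-method}, and then to strengthen its conclusion from the best iterate to the full sequence $\{v_k\}$. By Lemma~\ref{lem:L-inf-Lip-CHegy} the energy $G$ of \eqref{obj:CH-egy} is $\mu$--strongly convex and has locally Lipschitz continuous derivative with respect to the $\LL$--norm of the preconditioner \eqref{obj:LL}; hence $G$ satisfies the standing hypotheses of Lemma~\ref{lem:dream}, so that \eqref{est:dream} holds on any bounded convex set, and both Theorem~\ref{thm:invariant-set} and Theorem~\ref{thm:PPGD-conv-abs} become applicable once the perturbations are shown to uniformly vanish. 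I would fix $d_0 \coloneqq \norm{v_0 - u}{\LL}$, let $B$ be the ball in \eqref{obj:invariant-set}, and take $\s_0 \coloneqq \min\{\CddB,\CdvB^{-1}\}$ as in Theorem~\ref{thm:invariant-set}; these depend only on the problem data through the constants of \eqref{est:dream} on $B$, and in particular not on $\epsilon$.

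The crux of the argument is to verify that the perturbations generated by the inner PGD solver \eqref{mthd:PGD-Pois-res}--\eqref{mthd:PGD-Pois} uniformly vanish on $B$ in the sense of Definition~\ref{def:unif-vanish}. For $v \in B$ the perturbation \eqref{obj:pert-CH} is precisely the inner-solver error $A(v - u_\star;\theta) - \nlapmi(v - u_\star)$, i.e. the difference between the $N$--th PGD iterate and the exact solution of \eqref{eqn:lapm-weak} with datum $\phi = v - u_\star$, which is controlled in the $\Homz$--norm by Corollary~\ref{cor:unif-pertb}. Since this error lies in $\Homz \hookrightarrow L^2_0 \hookrightarrow \Hm$ (Remark~\ref{rmk:identification}), I would pass from the $\Homz$--norm to the $\LLinv$--norm using the embedding $\Homz \hookrightarrow \Hm$ (via the Poincar\'e inequality \eqref{eq:PoincareMeanZero}) together with the norm equivalence $\norm{\,\cdot\,}{\LLinv} \le C_1^{-1/2}\norm{\,\cdot\,}{\Hm}$ of Proposition~\ref{prop:LL-Hilbert}, obtaining a bound of the form $\norm{\eta_{v,\theta}}{\LLinv} \le C \norm{A(v - u_\star;\theta) - \nlapmi(v - u_\star)}{\Homz}$. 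The key point is uniformity over $v \in B$: because $B$ is bounded, both $\norm{v - u_\star}{\Hm}$ and the inner initial guess $\norm{v}{\Homz}$ remain bounded on $B$, so the iteration count $N$ supplied by Corollary~\ref{cor:unif-pertb} may be chosen independently of $v \in B$. This yields the uniform vanishing property, and I expect this norm translation with its attendant uniformity to be the main obstacle.

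With uniform vanishing established, Theorem~\ref{thm:invariant-set} applies: choosing the parameters $\theta_k$ so that $\norm{\eta_k}{\LLinv}^2 \le \epsilon_0$, with $\epsilon_0$ as in that theorem, confines the iterates to $B$, i.e. $d_k \coloneqq \norm{v_k - u}{\LL} \le d_0$ for all $k$. I would then reuse the key estimate \eqref{misc37} from the proof of Theorem~\ref{thm:PPGD-conv-abs}; discarding the nonnegative term $2\s\Delta G_{k+1}$ gives $d_{k+1}^2 \le \rho d_k^2 + 2\s e_{k+1}$ with $\rho = 1 - \mu\s \in (0,1)$. Bounding $e_{k+1} = \pairing{\eta_k}{u - v_{k+1}} \le \norm{\eta_k}{\LLinv} d_{k+1} \le \epsilon' d_0$, where $\epsilon'$ denotes the uniform perturbation bound and $d_{k+1} \le d_0$ by the invariant set, and summing the resulting geometric series yields $d_k^2 \le \rho^k d_0^2 + 2\epsilon' d_0/\mu$ for every $k$. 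Crucially, this controls the entire sequence, not merely the best iterate, which is what lets me reach a statement about $v_k$ rather than about $z_k$.

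Finally, given the target $\epsilon > 0$, I would select the solver parameters $\theta_k = (\tilde v_k,\ins_k,N_k)$ so that $\epsilon' \le \min\{\sqrt{\epsilon_0},\, \mu\epsilon^2/(4 d_0)\}$, which pushes the error term $2\epsilon' d_0/\mu$ below $\epsilon^2/2$ and is achievable uniformly on $B$ by taking each $N_k$ sufficiently large, exactly as guaranteed by the uniform vanishing just established. Choosing $\hat K \in \NN$ with $\rho^{\hat K} d_0^2 < \epsilon^2/2$ then gives, for all $k \ge \hat K$, the bound $d_k^2 < \epsilon^2$, that is $\norm{v_k - u}{\LL} < \epsilon$, which is the claim.
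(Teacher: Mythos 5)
Your proposal is correct, and at the structural level it follows the same route as the paper: the paper's own proof is essentially three lines, taking $\tilde v_k = v_k$ as the inner initial guess, citing Corollary~\ref{cor:unif-pertb} for the uniform vanishing property and Lemma~\ref{lem:L-inf-Lip-CHegy} for the strong convexity and local Lipschitz structure of $G$, and then concluding directly from Theorem~\ref{thm:PPGD-conv-abs}. Where you genuinely go beyond the paper is in two places, both of which the paper leaves implicit. First, you carry out the norm translation explicitly: Corollary~\ref{cor:unif-pertb} controls the inner-solver error in the $\Homz$--norm, while Definition~\ref{def:unif-vanish} and Theorem~\ref{thm:invariant-set} require a bound in the $\LLinv$--norm; your chain $\norm{\eta}{\LLinv} \le C_1^{-1/2}\norm{\eta}{\Hm} \le C\norm{\eta}{\Homz}$, via the Gelfand-triple identification of Remark~\ref{rmk:identification}, the Poincar\'e inequality \eqref{eq:PoincareMeanZero}, and Proposition~\ref{prop:LL-Hilbert}, together with the observation that the data $\norm{v-u_\star}{\Hm}$ and the initial guesses stay bounded on the invariant ball $B$ so that $N$ can be chosen uniformly, is exactly the verification the paper's citation of Corollary~\ref{cor:unif-pertb} glosses over. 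Second, and more substantively, you noticed that the conclusion \eqref{est:PPGD-err-conv} of Theorem~\ref{thm:PPGD-conv-abs} bounds only the \emph{best} iterate $z_k$, whereas Theorem~\ref{thm:PPGD-conv} asserts $\norm{\vk - u}{\LL} < \epsilon$ for \emph{all} $k \ge \hat K$; your repair --- discarding the nonnegative term $2\s\,\Delta G_{k+1}$ in the key recursion \eqref{misc37} to get $d_{k+1}^2 \le \rho d_k^2 + 2\s e_{k+1}$, bounding $e_{k+1} \le \norm{\etak}{\LLinv}\, d_{k+1} \le \epsilon' d_0$ via the invariant set, and summing the geometric series to obtain $d_k^2 \le \rho^k d_0^2 + 2\epsilon' d_0/\mu$ --- is sound (note $\rho = 1-\mu\s \in (0,1)$ since $\s \le \CddB < 1/\mu$), and it delivers full-sequence convergence that the literal statement of the abstract theorem does not. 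In short: your instantiation matches the paper, and your strengthening step supplies an argument the paper's proof implicitly relies on but does not write down; what it buys is a genuinely complete derivation of the stated conclusion, at the modest cost of rerunning one estimate from the abstract proof.
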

\begin{proof}
	Let the initial guess for the approximate residual be $\tilde{v}_k=\vk$.  Corollary~\ref{cor:unif-pertb} shows the uniform vanishing property for perturbations and Lemma~\ref{lem:L-inf-Lip-CHegy} shows the needed structure for $G$. Therefore, the conclusion follows from Theorem~\ref{thm:PPGD-conv-abs}.
\end{proof} 

\begin{rmk}[parameter dependencies]
  In the setting of Theorem~\ref{thm:PPGD-conv}, $\hat K$ depends on $\epsilon$, $v_0$, $C_P$, $u$ (through $B$), $C_{emb}$, $\lambda$, $\gamma$, $M_1$, and $M_2$.
	The inner number of iterations $\ins_k$ depends only on $M_1$ and $M_2$.
	The parameter $N_k$ depends on $\epsilon$, $k$, $\s$, $v_0$, $C_P$, $u$, $C_{emb}$, $\lambda$, $\gamma$, $M_1$, and $M_2$.
\end{rmk}

\section{Numerical experiments}\label{sec:numerics}

In this section, we conduct a series of numerical experiments to illustrate our analysis and the performance of the PPGD method. All codes were written in \emph{Python} in order to utilize \emph{PyTorch}'s \texttt{torch.fft.rfft} and \texttt{torch.fft.irfft} for the FFT on a graphics processing units (GPU). All these software tools are open source. We used a desktop with Intel Core i9 CPU, 64GB DDR4 3000MHz RAM, and NVIDIA GeForce RTX 3090 24GB GPU.

For spatial discretization, we employ the \emph{Fourier collocation method} \cite{canuto2007spectral, shen2011spectral}. For details we refer to \cite[Section 6.2]{psw2021PAGD}, where the authors use the same setting.

\subsection{The stationary Cahn-Hilliard equations with variable mobility}
\label{sec:num-exp2-Mob}
We compute numerical solutions to \eqref{eqn:CH3}.
The setting for the PDE and numerical method are shown in Table~\ref{tab:num-setting}.
The PDE data are as follows:

\begin{align}
	f(x,y)&=b(x,y;x_{1}, y_{1})
	\label{obj:f-L2data}
	\\
	u_\star(x,y)&=\frac{\Pi_0 b(x,y; x_{2}, y_{2})}{\norm{\Pi_0 b(\slot; x_{2}, y_{2})}{\infty}},
	\label{obj:u-star}
\end{align}
where $b(\slot; x_0, y_0):\Omega \to \RR$ is a \emph{blob} function having a global maximum at $(x_0,y_0)$:
\begin{equation}\label{obj:blob}
	b(x,y;x_0, y_0) = \exp\left( \cos\left( \frac{2\pi}{L}(x-x_0) \right)+\cos\left(\frac{2\pi}{L}(y-y_0)\right) \right).
\end{equation}
The mobility is, for $w \in \RR$,
\begin{equation}\label{obj:mob-fn}
	M(w)= \sqrt{(1-w^2)^2 + \delta_0^2},
\end{equation}
where $\delta_0>0$ is a user-defined parameter and assumed to be small. 
This is a regularized version of a thermodynamically plausible mobility $\tilde M (w)= (1-w^2) \chi_{\{|w|\leq 1\}}$, see, e.g., \cite{elliott1996degenerate}.
 
The mobility \eqref{obj:mob-fn} makes, for any measurable $w$, the composite function $\bfx \mapsto M(w(\bfx))$ satisfy \eqref{est:mob-bd}. In fact,
\begin{equation*}
	M_1 \coloneqq \delta_0 \le M(w(\bfx)) \le \sqrt{1+\delta_0^2} \eqqcolon M_2.
\end{equation*}
Hence, we have
\begin{equation}
	\frac{M_2}{M_1} \approx \delta_0^{-1}.
	\label{est:mob-ratio}
\end{equation}

\begin{table}[h!]
	\centering
	\begin{tabular}{>{\centering\arraybackslash}p{0.55\textwidth} >{\centering\arraybackslash}p{0.35\textwidth}}
		\toprule
		\textbf{PDE settings} & \textbf{Numerical settings} \\ 
		\midrule
		$\Omega = (0, 1)^2$
		
		$u_\star$ as in \eqref{obj:u-star} with $(x_1, y_1) = (0.75,\ 0.75)$,
		
		$f$ as in \eqref{obj:f-L2data} with $(x_1, y_1) = (0.25,\ 0.25)$,
		
		$M(\bfx) = \sqrt{(1 - u_\star(\bfx)^2)^2 + \delta_0^2}$,
		
		$\delta_0 = 0.1$ &
		
		$N = 2^7$, $N_0 = 1000$, $\hat{K} = 1000$,
		
		$\mathrm{Tol}_\mathrm{o} = 10^{-6}$, $\mathrm{Tol}_\mathrm{i} = 10^{-6}$,
		
		$\lambda = 1$, $\gamma = 0$,
		
		$\s = 1$, $\ins$ optimal as in \eqref{eql:optimal-step},
		
		$u_0(x, y) = 0$, $v_{k, 0} = v_k$  \\
		\bottomrule
	\end{tabular}
	\caption{Default settings of the numerical experiments for the stationary CH equation \eqref{eqn:CH3} using Algorithm~\ref{alg:PPGD}. Different values of $\delta_0$ are also used for experimentation.}
	\label{tab:num-setting}
\end{table}

Figure~\ref{fig:exp2-init} shows the plots of the PDE data functions and Figure~\ref{fig:exp2-sol} displays the plots of the numerical solution with $\delta_0=10^{-2}$ and its difference from those with $\delta_0=10^{-1},\ 10^{-3}$. As can be seen from Figure~\ref{figsub:exp2-sol-diff1} and \ref{figsub:exp2-sol-diff2}, the differences of the numerical solutions are relatively small, hence only one of them is displayed in Figure~\ref{figsub:exp2-sol}.  

\begin{figure}
	\centering
	\subfloat[$u_\star$ given by \eqref{obj:u-star} ]{\label{figsub:exp2-init-ustar} \includegraphics[width=0.5\linewidth]{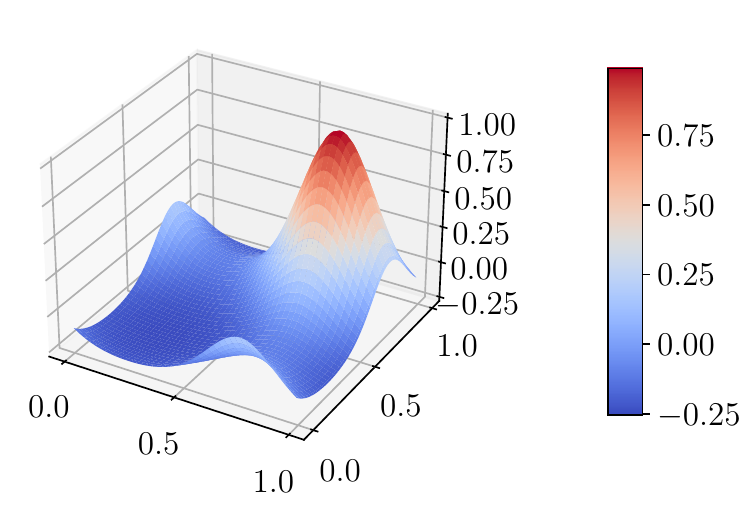}}
	\subfloat[$f$ given by \eqref{obj:f-L2data} ]{\label{figsub:exp2-init-f} \includegraphics[width=0.5\linewidth]{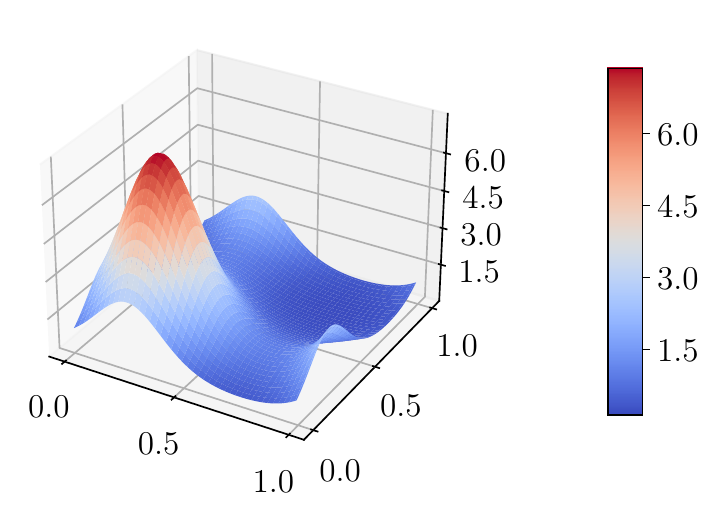}}
	\\
	\subfloat[$M$ given in Table~\ref{tab:num-setting} ]{\label{figsub:exp2-init-mob} \includegraphics[width=0.5\linewidth]{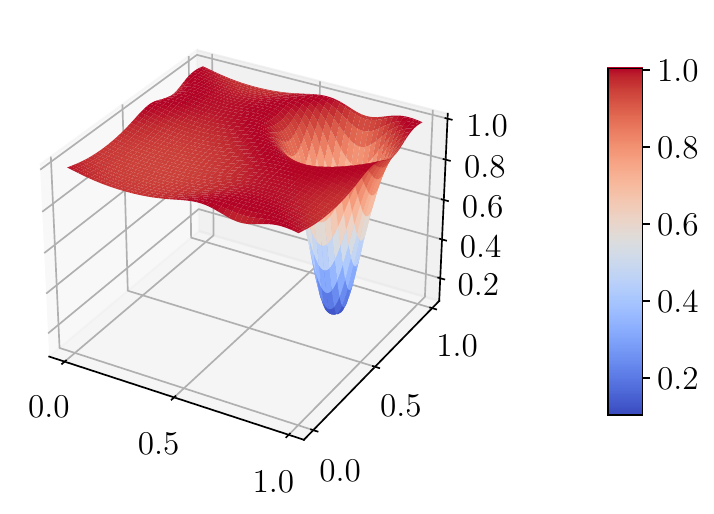}}
	\caption{Plots of the PDE data functions for numerical experiments.}
	\label{fig:exp2-init}
\end{figure}

\begin{figure}
	\centering
	\subfloat[Numerical solution \\ ($\delta_0=10^{-2}$)]{\label{figsub:exp2-sol} \includegraphics[width=0.5\linewidth]{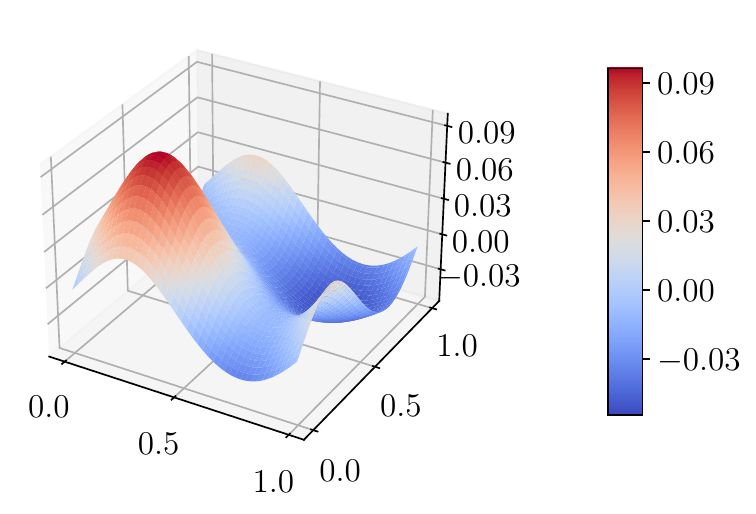}}
	\\
	\subfloat[Difference of numerical solutions \\ ($\delta_0=10^{-2}$ and $10^{-1}$)]{\label{figsub:exp2-sol-diff1} \includegraphics[width=0.5\linewidth]{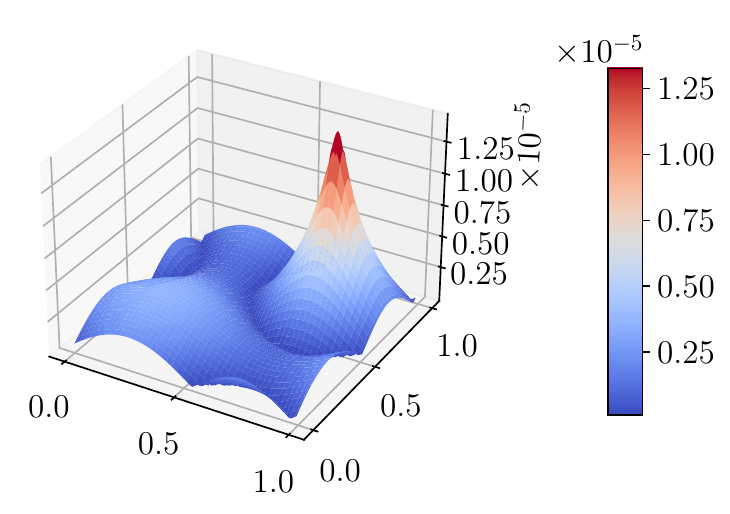}}
	\subfloat[Difference of numerical solutions \\ ($\delta_0=10^{-2}$ and $10^{-3}$)]{\label{figsub:exp2-sol-diff2} \includegraphics[width=0.5\linewidth]{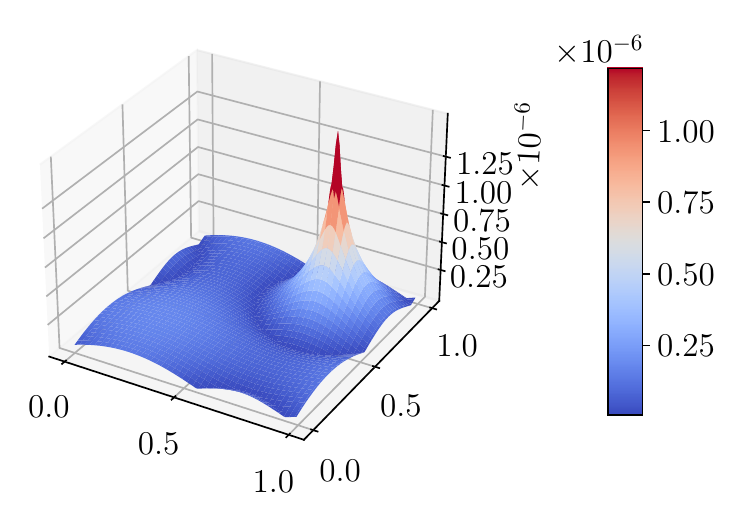}}
	\caption{Numerical solutions with different mobility ratios and their differences. The settings are given in Table~\ref{tab:num-setting}.}
	\label{fig:exp2-sol}
\end{figure}

Figure~\ref{fig:exp2-conv} shows the decay of the norm of residuals and the energy
against the number of outer iterations, and the number of inner iterations taken for each outer iteration. To compute the decay, we subtract from each data point the value of the energy at the last iterate.
Here the norm of residuals means the $\LL$--norm of the outer search direction $\norm{\tilde d}{\LL}$.
\begin{figure}
	\centering
	\subfloat[Decay of the norm of residuals ($\normL{\tilde d}$)]{\label{figsub:exp2-conv-res} \includegraphics[width=0.5\linewidth]{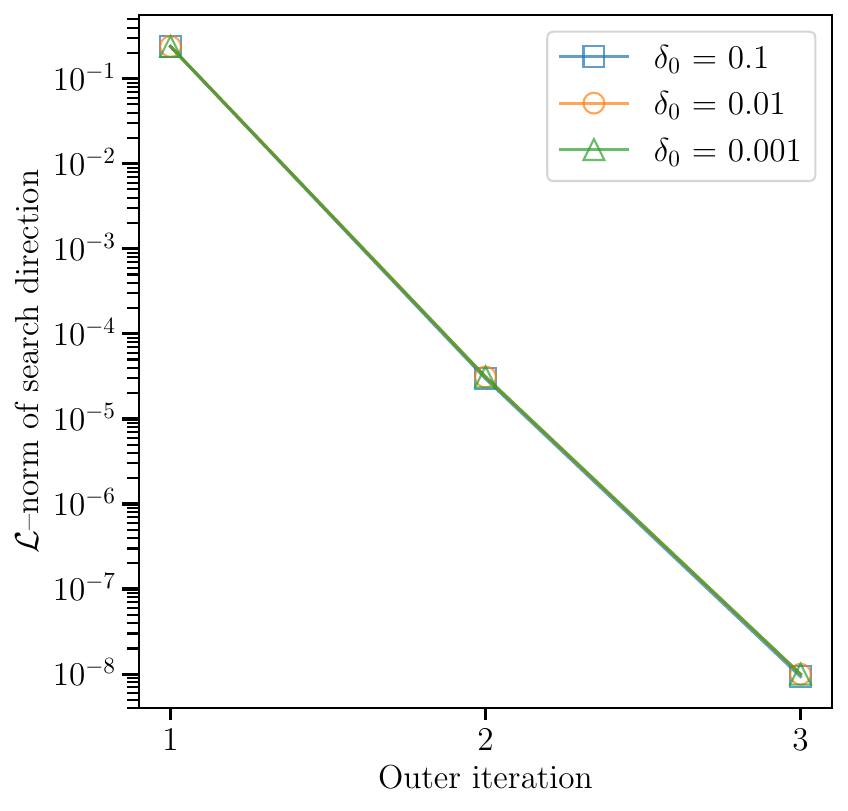}}
	\subfloat[Decay of energy ($G(v_k)$) given by \eqref{obj:CH-egy}]{\label{figsub:exp2-conv-egy} \includegraphics[width=0.5\linewidth]{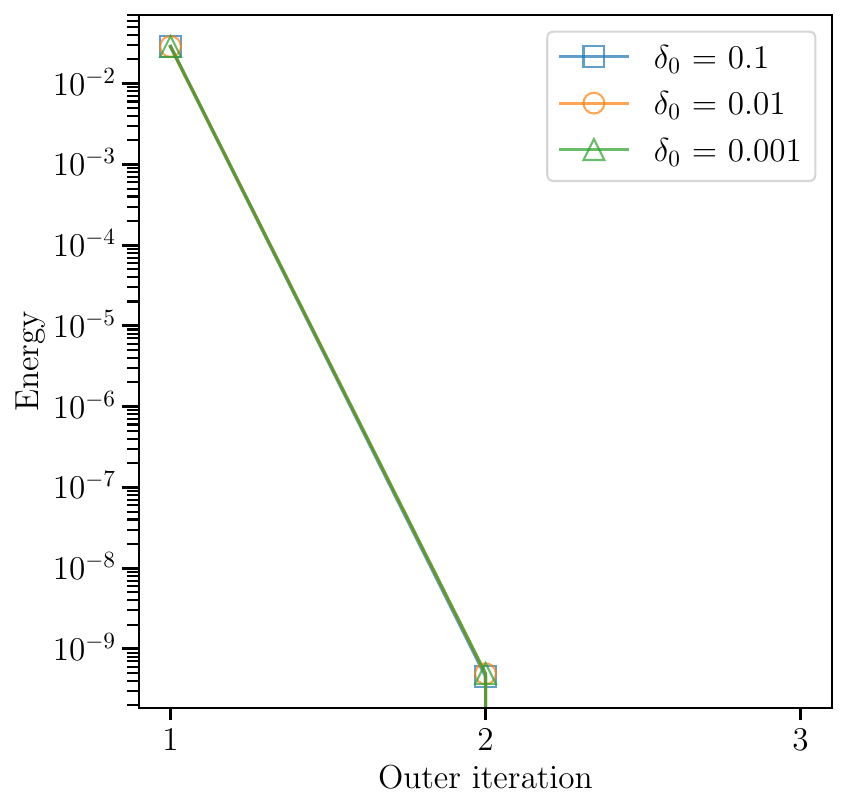}}
	\\
	\subfloat[The number of inner iterations taken for each outer iteration]{\label{figsub:exp2-conv-iter} \includegraphics[width=0.5\linewidth]{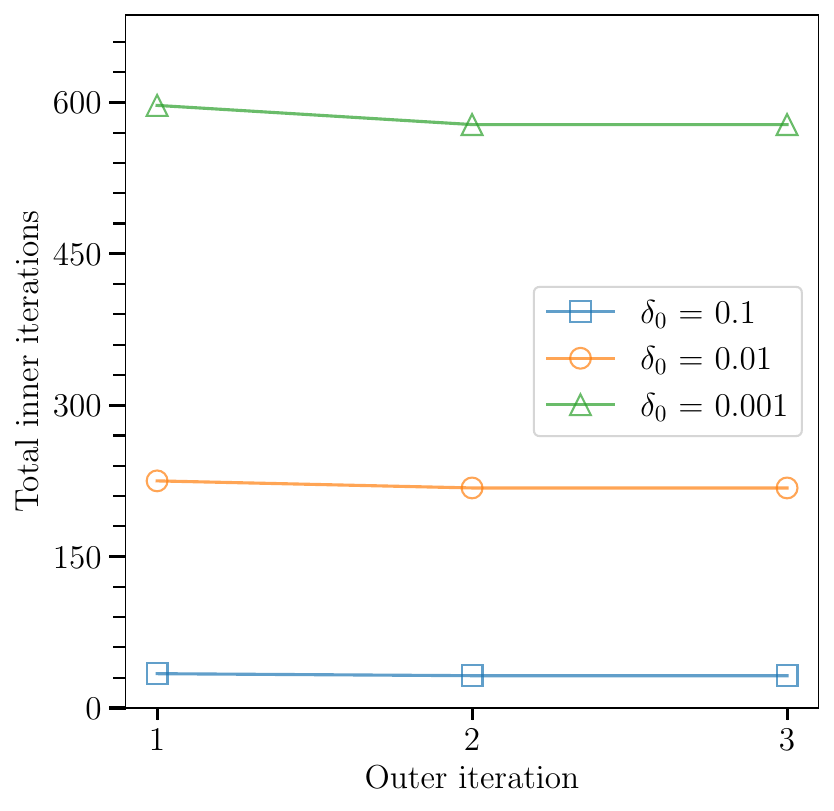}}
	\caption{Decay of the norm of residuals (a) and the energy (b), and the number of inner iterations taken for each outer iteration from experiment with different mobility ratios  (c). The settings are given in Table~\ref{tab:num-setting}.}
	\label{fig:exp2-conv}
\end{figure}

Table~\ref{tab:rslt-mob-ratio} summarizes the result of the experiments. We report the number of FFTs, the total number of inner iterations taken for the method to exit the double-loop, and the wall-clock time for each of the values of $\delta_0$ used; the smaller $\delta_0$ is, the less regular the mobility is as it can be seen in \eqref{est:mob-ratio}.  
\begin{table}[]
	\centering
	\csvreader[head to column names, 
	before reading = \begin{center},
		tabular = cccc,
		table head = \toprule $\delta_0$ & \textbf{FFT}
		& \textbf{Inner iterations} & \textbf{Wall-clock time (sec)} \\\midrule,
		table foot = \bottomrule,
		after reading = \end{center}
	]
	{MobRatio.csv}{}
	{ \tablenum[
		round-precision=3,
		round-mode=places
		]{\delnaught} & 
		\tablenum[round-precision=0, round-mode=places]{\FFT} & 
		\tablenum[round-precision=0, round-mode=places]{\iter} & 
		\tablenum[round-precision=3, round-mode=places]{\time}
	}
	\caption{Computational cost depending on the mobility ratio (Section~\ref{sec:num-exp2-Mob}). The smaller $\delta_0$ is, the less regular the mobility is as can be seen in \eqref{est:mob-ratio}.}
	\label{tab:rslt-mob-ratio}
\end{table}

Let us first discuss theoretical implications that we can see from the numerical experiments.  
Figure~\ref{figsub:exp2-conv-iter} confirms the uniform vanishing  property for perturbations as Corollary~\ref{cor:unif-pertb} asserts. Once $\mathrm{Tol}_{\mathrm{i}}$ (stopping tolerance) and $\s$ (fixed outer step size) are set, the number of inner iterations stays constant as the outer iteration proceeds for all different mobility settings. 
Figure~\ref{figsub:exp2-conv-res} and \ref{figsub:exp2-conv-egy} show that the decay of the norm of residuals and the energy almost completely overlap across different values of $\delta_0$. This suggests that the two main computational challenges are more or less independent of each other: solving the linear elliptic equation with variable coefficient, and solving the nonlinear CH equation with the former already solved, reflecting the decomposition of the residual into a linear, challenging part and nonlinear part.
The same figures also show linear convergence of the PPGD method up to perturbation tolerance as Theorem~\ref{thm:invariant-set} implies.

From the computational point of view, 
as Table~\ref{tab:rslt-mob-ratio} shows, as the ratio of the maximum and minimum of the mobility gets larger, we need more computations. Figure~\ref{fig:exp2-conv} gives a finer view of this.
We note that the double-loop of the PPGD method turns out to be fairly efficient. As Figure~\ref{figsub:exp2-conv-res} and \ref{figsub:exp2-conv-egy} show, the outer loop requires only a few iterations. This makes the efficiency of the method comparable with that of algorithms featuring only one loop.

\section{Conclusion}

We have proposed the perturbed preconditioned gradient descent method to solve composite unconstrained minimization problems, where the objective function includes a term whose exact gradient is known only approximately. The notion of a uniform vanishing property for perturbations is introduced, and we have proved convergence result for this method that is similar to other methods suited for this scenario. To be specific, when the perturbations vanish uniformly and the objective function is strongly convex with locally Lipschitz continuous derivatives, we have linear convergence, up to an error term. As a technical tool, we proved two estimates that generalize existing results to locally Lipschitz smooth functions.
These allowed us to show the existence of an invariant set for the iterations, and thus the analysis follows the one for globally Lipschitz smooth objectives, even when the method does not have a \emph{descent} property.

To illustrate the PPGD method, a numerical study of the stationary Cahn-Hilliard equations with variable mobility under periodic boundary conditions has been conducted. The method allows us to use FFT when it is not possible in usual formulations. This study has confirmed theoretical analysis, including linear convergence when the effect of perturbation seems minor. Numerical results have also given some insights into how the ratio between maximum and minimum values of the mobility affects computational efficiency.

\section*{Declaration}

\subsection*{Funding}

The work of AJS was partially supported by NSF grant DMS-2409918. The work of SMW was partially supported by NSF grant DMS-2309547. The work of JHP did not receive support from any organization for the submitted work.

\subsection*{Data availability}

The authors declare that the data supporting the findings of this study are available within the paper. Code for generating the data used can be available on \texttt{https://github.com/jhparkyb/ppgdcode.git}.

\bibliographystyle{plain}
\bibliography{PPGD.bib}
\end{document}